\documentclass[12pt]{amsart}
\usepackage{amssymb}
\usepackage{hyperref,hyphenat}
\usepackage[all]{xy}
\usepackage[toc,page,title,titletoc,header]{appendix}
\usepackage{xcolor}

\numberwithin{equation}{section}

\theoremstyle{plain}
\newtheorem{Thm}{Theorem}[section]
\newtheorem{Prop}[Thm]{Proposition}
\newtheorem{Lem}[Thm]{Lemma}

\newtheorem{Cor}[Thm]{Corollary}
\theoremstyle{remark}
\newtheorem{Rem}[Thm]{Remark}
\theoremstyle{definition}
\newtheorem{Def}[Thm]{Definition}

\def\A{\mathbb{A}}
\def\Z{\mathbb{Z}}
\def\Q{\mathbb{Q}}
\def\R{\mathbb{R}}
\def\C{\mathbb{C}}
\def\P{\mathbb{P}}

\def\sL{\mathcal{L}}
\def\sT{\mathcal{T}}
\def\sD{\mathcal{D}}

\def\Lyap{\mathrm{Lyap}}
\def\Aff{\mathrm{Aff}}
\def\Spec{\mathrm{Spec}}
\def\PrePer{\mathrm{PrePer}}
\def\Poly{\mathrm{Poly}}
\def\MPoly{\mathrm{MPoly}}
\def\Zar{\mathrm{Zar}}
\def\Gal{\mathrm{Gal}}
\def\crit{\mathrm{crit}}
\def\Man{\mathrm{Man}}

\def\la{\lambda}

\begin{document}
\title[]{Rigidity of Lyapunov exponents for polynomials}

\author{Zhuchao Ji}
\address{Institute for Theoretical Sciences, Westlake University, Hangzhou 310030, China}
\email{jizhuchao@westlake.edu.cn}

\author{Junyi Xie}
\address{Beijing International Center for Mathematical Research, Peking University, Beijing 100871, China}
\email{xiejunyi@bicmr.pku.edu.cn}

\author{Geng-Rui Zhang}
\address{School of Mathematical Sciences, Peking University, Beijing 100871, China}
\email{grzhang@stu.pku.edu.cn, chibasei@163.com}

\subjclass[2020]{Primary 37P05; Secondary 37P15, 37P30}

\keywords{Lyapunov exponents, Polynomials, B{\"o}ttcher coordinates, Canonical heights, Multiplier spectra.}

\begin{abstract}
	Let $f,g\in\overline{\Q}[z]$ be polynomials of degree $d\geq2$ with disconnected Julia sets. We prove that they have the same Lyapunov exponent $\sL_f=\sL_g$ if and only if either $f$ and $g$ are intertwined, or $f$ and $\overline{g}$ are intertwined. The analogous result for critical heights is also obtained. As an application, we provide a new proof of the theorem stating that the multiplier spectrum morphism on the moduli space of polynomials is generically injective.
\end{abstract}

\maketitle

\section{Introduction}

\subsection{Statement of the main theorem}
Let $f\in\C[z]$ be a polynomial of degree $d\geq2$. The \emph{unique maximal entropy measure} of $f$ on $\P^1(\C)$ is denoted by $\mu_f$; see \cite{Lyubich83,FLM83,Mane83}. The measure $\mu_f$ is supported on the \emph{Julia set} $J(f)\subset\C$ of $f$, which is compact in $\C$. The \emph{Lyapunov exponent} (with respect to $\mu_f$) of $f$ is defined by
$$\sL_f:=\int_{\C}\log|f^\prime|\ d\mu_f.$$
It is well-known that $\sL_f\geq\log(d)$, and equality holds if and only if $J(f)$ is connected (cf. \cite{Prz85}). The quantity $\exp(\sL_f)$ is considered as an average of $|f^\prime|$ over $J(f)$, which measures the average expansion rate of $f$ along a typical orbit.

We consider the following algebraic relation between polynomials:
\begin{Def}\label{intwdef}
	Let $f,g\in\C[z]$ be polynomials of degree $\geq2$. We say that $f$ and $g$ are \emph{intertwined} if there exists a (possibly reducible) algebraic curve $Z\subset\A^2$ over $\C$ whose projections to both axes are surjective such that $Z$ is invariant under the endomorphism $(f,g):\A^2\to\A^2,(x,y)\mapsto(f(x),g(y))$.
\end{Def}
The polynomials $f$ and $g$ are intertwined if and only if there exist non-constant polynomials $R,h_1,h_2\in\C[z]\setminus\C$ and $n\in\Z_{>0}$ such that
\begin{equation}\label{intwdef2}
	f^{\circ n}\circ h_1=h_1\circ R\quad\text{and}\quad g^{\circ n}\circ h_2=h_2\circ R.
\end{equation}
(Here $f^{\circ n}$ denotes the $n$-th iterate of $f$.) See \cite[Theorem~3.39]{FG22}. In particular, $\deg(f)=\deg(g)$ if $f$ and $g$ are intertwined.

Let $\overline{f}\in\C[z]$ be the polynomial of degree $d$ obtained from $f$ by applying complex conjugation to all coefficients. Then $\sL_f=\sL_{\overline{f}}$ (see Lemma~\ref{intwLyapsame}).

From now on, we fix an algebraic closure $\overline{\Q}$ of $\Q$ in $\C$ and view all number fields as subfields of $\C$. All polynomials with coefficients in $\overline{\Q}$ are also viewed as polynomials with coefficients in $\C$ by the fixed embedding $\overline{\Q}\hookrightarrow\C$. Our aim is to study the rigidity of Lyapunov exponents for polynomials. The main theorem of this paper asserts that for polynomials defined over $\overline{\Q}$ with disconnected Julia sets, Lyapunov exponents determine their intertwining classes:
\begin{Thm}\label{thmmain}
	Let $f,g\in\overline{\Q}[z]$ be polynomials of degree $d\geq2$. Assume that $\sL_f>\log(d)$ (equivalently, $J(f)$ is disconnected). Then $\sL_f=\sL_g$ if and only if $f$ and $\hat{g}$ are intertwined for some $\hat{g}\in\{g,\overline{g}\}$.
\end{Thm}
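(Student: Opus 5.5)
We first prove the easy direction, then reduce the hard direction to a statement about canonical heights and Böttcher coordinates.

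\emph{Easy direction.} If $f$ and $\hat g$ are intertwined, then by \eqref{intwdef2} there are $R,h_1,h_2$ and $n$ with $f^{\circ n}\circ h_1=h_1\circ R$ and $\hat g^{\circ n}\circ h_2=h_2\circ R$. Since $h_i$ is a semiconjugacy from $R$ to $f^{\circ n}$ (resp.\ $\hat g^{\circ n}$), we have $(h_1)_*\mu_R=\mu_{f^{\circ n}}=\mu_f$. The chain rule gives
\[
\sL_{f^{\circ n}}+\int\log|h_1'|\,d\mu_R=\sL_R+\int\log|h_1'\circ R|\,d\mu_R .
\]
By $R$-invariance of $\mu_R$ the $h_1'$ terms cancel, so $n\sL_f=\sL_R$. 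Integrability holds because $\mu_R$ has continuous potentials. The same argument gives $n\sL_{\hat g}=\sL_R$, and combining with $\sL_g=\sL_{\overline g}$ yields $\sL_f=\sL_g$.

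\emph{Hard direction: reduction to critical heights.} We use the Przytycki/Manning formula
\[
\sL_f=\log d+\sum_{c\in\crit(f)} G_f(c),
\]
where $G_f$ is the Green function and critical points are counted with multiplicity. So $\sL_f=\sL_g$ says the sums of escape rates of critical points agree. Over $\overline{\Q}$, $G_f$ is the archimedean local canonical height at our fixed embedding. The plan is then:
\begin{enumerate}
\item[(i)] Vary over a family. Put $f,g$ into an algebraic family over a curve, or use Galois conjugates and specialization, so that the equality of archimedean critical Green functions forces equalities of global critical heights. Apply equidistribution (Yuan / Favre--Rivera-Letelier type) for small points in the parameter space. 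This mirrors the analogous result for critical heights, which we would prove in parallel.
\item[(ii)] Use the Böttcher coordinate. Near $\infty$, $G_f=\log|\phi_f|$, where $\phi_f$ is the Böttcher coordinate with $\phi_f\circ f=\phi_f^d$. Since $J(f)$ is disconnected, some critical point escapes. The values $G_f(c)$ for escaping $c$ are determined by $\log|\phi_f(f^{k}(c))|$ at large iterates, which are analytic in the coefficients.
\item[(iii)] Treat the identity $\sum G_f(c)=\sum G_g(c)$, viewed as an identity of real-analytic (pluriharmonic off the bifurcation locus) functions. After applying $\partial\bar\partial$ and the equidistribution input, it becomes a relation of the form $\phi_f(a)=\zeta\,\phi_g(b)$ or $\phi_f(a)=\zeta\,\overline{\phi_g(b)}$ between Böttcher coordinates of suitable escaping points. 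The conjugate case accounts for $\overline g$. This step is the source of the dichotomy $\hat g\in\{g,\overline g\}$: an equality of $\log|\cdot|$ of holomorphic functions forces the functions to agree up to a unimodular constant, or up to a holomorphic/antiholomorphic ambiguity.
\item[(iv)] Conclude via a rigidity criterion. A transcendence/unlikely-intersection statement for Böttcher coordinates over $\overline{\Q}$, in the spirit of Ghioca--Nguyen or the Baker--DeMarco arguments, shows that such a relation forces an invariant curve for $(f,\hat g)$. Then Medvedev--Scanlon and Definition~\ref{intwdef} give intertwining.
\end{enumerate}

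\emph{Main obstacle.} Passing from a single real equality of sums of Green functions to a holomorphic relation between Böttcher coordinates is the hard part. A single number gives little information. The key is to exploit the $\overline{\Q}$ hypothesis: all Galois conjugate embeddings are available, and we can combine them with the product formula on the heights. The first attempt would be to show that the global critical heights of $f$ and $g$ agree at every place, and then invoke the known theorem that polynomials with equal canonical height functions (or equal critical height data) are intertwined.
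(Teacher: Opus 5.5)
\textbf{There is a genuine gap in the hard direction.} Your easy direction is correct: the chain-rule computation, with $(h_1)_*\mu_R=\mu_f$ and $R$-invariance of $\mu_R$, is the standard argument behind Lemma~\ref{intwLyapsame}.

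None of steps (i)--(iv) turns the single real equation $\sL_f=\sL_g$ into an algebraic relation.
\begin{enumerate}
\item Since $f$ and $g$ are fixed, there is no family to vary. Equidistribution in parameter space and $\partial\bar\partial$ of an ``identity of real-analytic functions'' therefore have nothing to act on.
\item Step (iii) fails as stated. The equality $|\phi_f(a)|=|\phi_g(b)|$ only gives $\phi_f(a)=u\,\phi_g(b)$ with an arbitrary unimodular $u$, which no transcendence theorem can use. Moreover, several critical points may escape, so $\sum_i G_f(c_i)=\sum_j G_g(\la_j)$ does not split into termwise relations.
\item Your fallback of first showing that the critical heights agree at every place has no mechanism behind it, because the hypothesis concerns one archimedean place only. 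Even if it succeeded, Theorem~\ref{thmht} would only give $f$ intertwined with $\sigma(g)$ for \emph{some} $\sigma\in\Gal(\overline{\Q}/\Q)$. That is weaker than the required $\hat g\in\{g,\overline g\}$.
\end{enumerate}

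The missing idea is elementary. Write $|w|^2=w\overline{w}$, and note that $\overline{\phi_f}$ is a B{\"o}ttcher coordinate of $\overline f$. Hence $\overline{\phi_f(a)}=\zeta_1\phi_{\overline f}(\overline a)$ with $\zeta_1^{d-1}=1$, and similarly $\overline{\phi_g}=\zeta_2\phi_{\overline g}$. Let $c_1,\dots,c_s$ and $\la_1,\dots,\la_t$ be the escaping critical points of $f$ and $g$. Put $a_i=f^{\circ N}(c_i)$ and $b_j=g^{\circ N}(\la_j)$, which lie in the convergence region and are still algebraic. Multiplying Przytycki's equality by $d^N$ then gives
\[
\prod_{i=1}^s\phi_f(a_i)\phi_{\overline f}(\overline{a_i})\prod_{j=1}^t\phi_g(b_j)^{-1}\phi_{\overline g}(\overline{b_j})^{-1}=\zeta_1^{-s}\zeta_2^{t}.
\]
This is a multiplicative relation among B{\"o}ttcher values of the four algebraic polynomials $f,\overline f,g,\overline g$ at algebraic points.

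One then applies the transcendence theorem for such products (Theorem~\ref{transBott}). All four maps are non-exceptional, as that theorem requires, because $\sL>\log(d)$. Its sign conclusion is essential: it produces an intertwined pair with exponents of opposite signs. That pair is therefore some $\tilde f\in\{f,\overline f\}$ with some $\tilde g\in\{g,\overline g\}$, rather than, say, $f$ with $\overline f$. Conjugating if necessary gives $f$ intertwined with $g$ or $\overline g$.

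Your step (iv) points at the right kind of tool. Without the conjugation trick and the signed product form, however, there is nothing to feed it.
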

\begin{Rem}\label{rmkcond}
	Let $d\geq2$ be an integer. By considering the unicritical family $\{z^d+c\}_{c\in\C}$, it is easy to see that for every real number $L_0\geq\log(d)$, there is an uncountable subset $A_{d,L_0}$ of $\C[z]$ consisting of polynomials of degree $d$ with Lyapunov exponent $L_0$ such that for all distinct $f,g\in A_{d,L_0}$, $f$ and $g$ are not conjugate. Combined with the result \cite[Theorem~3.46]{FG22} on the finiteness of intertwining classes (see also \cite{Pakintw}), we conclude that for every real number $L_0>\log(d)$, there exist polynomials $f,g\in\C[z]$ of degree $d$ satisfying:
	\begin{itemize}
		\item $\sL_f=\sL_g=L_0$;
		\item $f$ and $g$ are not intertwined;
		\item $f$ and $\overline{g}$ are not intertwined.
	\end{itemize}
	(In fact, the set of such polynomials can be chosen to be uncountable.) Thus, we cannot remove the condition that $f$ and $g$ have $\overline{\Q}$-coefficients in Theorem~\ref{thmmain}.
	
	It is also easy to see that there exist $f,g\in\overline{\Q}[z]$ of degree $d$ with
	$$\sL_f=\sL_g=\log(d),$$
	but $f$ and $\hat{g}$ are not intertwined for any $\hat{g}\in\{g,\overline{g}\}$. So the condition that $\sL_f>\log(d)$ cannot be removed either.
\end{Rem}
\begin{Rem}
	Let $d\geq2$ be an integer. By \cite[Theorems~3.51~and~3.52]{FG22}, a ``general'' polynomial $f$ of degree $d$ satisfies the property that every polynomial $g$ of degree $d$ which is intertwined with $f$ must be conjugate to $f$. Here ``general'' means the property holds for $f$ in a Zariski open dense subset of the parameter space $\Poly^d$.
\end{Rem}
The proof of Theorem~\ref{thmmain} will be given in \S~\ref{secpf}, based on the second-named author's result \cite[Theorem~1.13]{trans} on the transcendence of products of B{\"o}ttcher coordinates (see Theorem~\ref{transBott}). In the process of argument, we can prove that for every polynomial $f\in\overline{\Q}[z]$ of degree $d\geq2$, either $\sL_f=\log(d)$ (equivalently, $J(f)$ is connected), or $\exp(\sL_f)\notin\overline{\Q}$. See Corollary~\ref{LyaAlg}.

\subsection{An analogue for critical heights}
Let $f\in\overline{\Q}[z]$ be a polynomial of degree $d\geq2$. The \emph{canonical height function} of $f$ is the function $\hat{h}_f:\overline{\Q}\to\R_{\geq0}$ given by
$$\hat{h}_f(a)=\lim\limits_{n\to\infty}\frac{1}{d^n}h(f^{\circ n}(a)), a\in\overline{\Q},$$
where $h:\overline{\Q}\to\R_{\geq0}$ is the (absolute logarithmic) height function on $\overline{\Q}$. The zero locus $\hat{h}_f^{-1}(0)$ equals $\PrePer(f,\overline{\Q})$, the set of $f$-preperiodic points in $\overline{\Q}$. The \emph{multiplicative canonical height function} of $f$ is the function
$$\hat{H}_f=\exp(\hat{h}_f):\overline{\Q}\to\R_{\geq1}.$$
See \cite{CallSilverman} and \cite[Chapter~3]{Silverman2007} for more details about canonical heights.

The \emph{critical (canonical) height} of $f$ is defined by
\begin{equation}\label{crithtdef}
	\hat{h}_{\crit}(f):=\sum_{\{c\in\C:f^\prime(c)=0\}}\hat{h}_f(c)\geq0,
\end{equation}
where the critical points are counted with multiplicity; see \cite[\S~6.2]{Silverman2012}. The function is invariant under conjugacy, so it induces a function $\hat{h}_{\crit}:\MPoly^d(\C)\to\R_{\geq0}$ on the moduli space of degree $d$ polynomials $\MPoly^d(\C)$ (see \S~\ref{secmul}). Note that $\hat{h}_{\crit}(f)=0$ if and only if $f$ is post-critically finite, i.e., all critical points of $f$ are $f$-preperiodic. Ingram \cite{Ing12} proved that $\hat{h}_{\crit}$ is comparable to an ample Weil height on $\MPoly^d(\C)$. The \emph{multiplicative critical (canonical) height} of $f$ is
\begin{equation}\label{mcrithtdef}
	\hat{H}_{\crit}(f):=\exp\left(\hat{h}_{\crit}(f)\right)=\prod_{\{c\in\C:f^\prime(c)=0\}}\hat{H}_f(c)\geq1.
\end{equation}

The (multiplicative) canonical heights admit decompositions into local canonical heights \cite[Theorem~3.29]{Silverman2007}. The (multiplicative) critical height is a ``global'' invariant of the system $(\A^1,f)$. Considering all archimedean places, we say that $f$ has a disconnected Julia set after Galois conjugation if $J(\sigma(f))\subset\C$ is disconnected for some $\sigma\in\Gal(\overline{\Q}/\Q)$, where $\sigma(f)\in\overline{\Q}[z]$ is the polynomial obtained from $f$ by applying $\sigma$ to all coefficients and viewed as a polynomial over $\C$ via the fixed embedding $\overline{\Q}\hookrightarrow\C$. We can prove that $f$ has a disconnected Julia set after Galois conjugation if and only if $\hat{H}_{\crit}(f)\notin\overline{\Q}$ (Corollary~\ref{disJGal}).

Our main theorem, Theorem~\ref{thmmain}, deals with only one archimedean place, corresponding to the fixed embedding $\overline{\Q}\hookrightarrow\C$. The following theorem is a ``global'' analogue of Theorem~\ref{thmmain} considering all archimedean places; its proof requires \cite[Proposition~1.14]{trans}, a result on the transcendence of products of canonical heights for polynomials:
\begin{Thm}\label{thmht}
	Let $f,g\in\overline{\Q}[z]$ be polynomials of degree $d\geq2$. Assume that $f$ has a disconnected Julia set after Galois conjugation. Then $\hat{h}_{\crit}(f)=\hat{h}_{\crit}(g)$ if and only if $f$ and $\sigma(g)$ are intertwined for some $\sigma\in\Gal(\overline{\Q}/\Q)$.
\end{Thm}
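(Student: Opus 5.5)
The plan mirrors the proof of Theorem~\ref{thmmain}, with the single archimedean place replaced by the full product formula. The key input is a decomposition of $\hat{H}_{\crit}(f)$ into local contributions. By \cite[Theorem~3.29]{Silverman2007}, $\hat{h}_f=\sum_v \frac{[K_v:\Q_v]}{[K:\Q]}\hat{\lambda}_{f,v}$ over all places $v$ of a number field $K$ containing the coefficients of $f$ and $g$.

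\textbf{Step 1: the local pieces.} At an archimedean place $v$ corresponding to an embedding $\sigma$, the local height of a critical point $c$ is $G_{\sigma(f)}(\sigma(c))$, the Green function. Through the Böttcher coordinate, $\exp(G_{\sigma(f)}(\sigma(c)))=|\phi_{\sigma(f)}(\sigma(c))|$, at least after replacing $c$ by a high iterate (using $G(f^{n}(c))=d^nG(c)$ and the fact that $\phi$ is defined near $\infty$). At a non-archimedean place, the local contributions are rational multiples of $\log$ of algebraic numbers, since the $v$-adic Green function at an algebraic point takes values in $\Q\cdot\log|\,\overline{\Q}^*|_v$.

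Collecting terms gives
\[
\hat{H}_{\crit}(f)^{N}=\alpha_f\prod_{\sigma}\prod_{c:\,f'(c)=0}\bigl|\phi_{\sigma(f)}(\sigma(c))\bigr|^{m_{\sigma}}
\]
for some $N\ge1$ and $\alpha_f\in\overline{\Q}$. Here the product runs over the embeddings where the critical point escapes, and the $m_\sigma$ are positive integers. Only places where $J(\sigma(f))$ is disconnected contribute nontrivially. This yields Corollary~\ref{disJGal} as a byproduct, provided $\hat{H}_{\crit}(f)$ is shown transcendental in that case.

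\textbf{Step 2: the easy direction.} If $f$ and $\sigma(g)$ are intertwined, I use \eqref{intwdef2}: $f^{\circ n}\circ h_1=h_1\circ R$ and $\sigma(g)^{\circ n}\circ h_2=h_2\circ R$. These semiconjugacies relate canonical heights, via $\hat{h}_{f}\circ h_1=\deg(h_1)\hat{h}_R$. They also relate critical points through the chain rule, as in the proof of Lemma~\ref{intwLyapsame} and its analogue for Lyapunov exponents. From this I expect $\hat{h}_{\crit}(f)=\hat{h}_{\crit}(\sigma(g))$. Finally, $\hat{h}_{\crit}$ is Galois invariant because the height $h$ is, so $\hat{h}_{\crit}(\sigma(g))=\hat{h}_{\crit}(g)$. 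The cleanest route is probably to express $\hat{h}_{\crit}$ as the integral of $\log|f'|$ against the adelic equilibrium measures summed over places, i.e.\ a sum of local Lyapunov exponents. It is then invariant under semiconjugacy by the same argument used for $\sL_f$ at each place.

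\textbf{Step 3: the converse.} Suppose $\hat{h}_{\crit}(f)=\hat{h}_{\crit}(g)$. Step 1 then turns this into a multiplicative relation
\[
\prod_{\sigma,c}\bigl|\phi_{\sigma(f)}(\sigma(c))\bigr|^{m_\sigma}\prod_{\tau,c'}\bigl|\phi_{\tau(g)}(\tau(c'))\bigr|^{-m'_\tau}\in\overline{\Q}.
\]
Writing $|z|^2=z\bar z$ and $\overline{\phi_F(x)}=\phi_{\overline F}(\bar x)$, this becomes an algebraicity statement for a product of Böttcher coordinates of polynomials in the finite set $\{\sigma(f),\tau(g),\overline{\sigma(f)},\overline{\tau(g)}\}$ at algebraic escaping points. \cite[Proposition~1.14]{trans}, the transcendence result for products of canonical heights, is exactly designed to conclude from such a relation that some factor involving $f$ must be paired with a factor involving a conjugate of $g$ via intertwining. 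Since $f$ has an escaping critical point at some embedding, the $f$-factors cannot all cancel among themselves. They cannot cancel because $\hat{H}_{\crit}(f)\notin\overline{\Q}$, by the same proposition applied to $f$ alone. So some $\sigma_0(f)$ is intertwined with some $\tau_0(g)$ or with its complex conjugate. Complex conjugation restricted to $\overline{\Q}$ is itself in $\Gal(\overline{\Q}/\Q)$, and applying $\sigma_0^{-1}$ preserves intertwining, since the invariant curve is defined over $\overline{\Q}$. Hence $f$ is intertwined with $\sigma(g)$ for some $\sigma$.

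\textbf{Main obstacle.} I expect the hardest part to be Step 3: matching the precise hypotheses of \cite[Proposition~1.14]{trans}, and ruling out cancellation among the $f$-factors so that a genuine cross pairing between $f$ and $g$ is forced. I would first apply the proposition directly to the canonical-height form $\prod\hat{H}_{f}(c)\prod\hat{H}_{g}(c')^{-1}=1$, avoiding Böttcher coordinates altogether. I would fall back to the archimedean decomposition only if needed.
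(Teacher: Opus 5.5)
Your fallback in the last paragraph, applying Proposition~\ref{transht} (\cite[Proposition~1.14]{trans}) directly to $\prod_{c}\hat{H}_f(c)\prod_{c'}\hat{H}_g(c')^{-1}=1$, is exactly the paper's proof. Your Step~2 is Lemma~\ref{intwCrithtsame}. The ``hypothesis matching'' you leave open is settled as in the paper:
\begin{itemize}
\item Proposition~\ref{transht} requires every pair to lie in $\sT_d$. So first discard each critical point $c$ with $\sigma(c)\in K(\sigma(f))$ for all $\sigma$, and likewise for $g$. For these points $\hat{H}_f(c)\in\overline{\Q}$ by \eqref{Ngu}, which is precisely the direction your Step~1 computation already proves.
\item Both polynomials are non-exceptional. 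For $g$ this holds because exceptional polynomials have $\hat{h}_{\crit}=0<\hat{h}_{\crit}(f)$; alternatively use Corollary~\ref{disJGal}.
\item Proposition~\ref{transht} then gives $i_+\neq i_-$ in one $\sim_w$-class with $n_{i_+}\geq1$ and $n_{i_-}\leq-1$. The positive exponents are exactly the $f$-indices and the negative ones the $g$-indices, so this pairs $f$ with some $\sigma(g)$ at once.
\end{itemize}
Your separate ``no-cancellation'' argument is therefore redundant.

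Your main Step~3, however, cites the wrong result. \cite[Proposition~1.14]{trans} takes as input an algebraic product of multiplicative canonical heights, not of B{\"o}ttcher coordinates. After squaring you reach a B{\"o}ttcher product; note $\overline{\phi_F}=\zeta\phi_{\overline{F}}$ with $\zeta^{d-1}=1$, as in \eqref{zeta}, but this algebraic factor is harmless. For that product the correct input is Theorem~\ref{transBott} (\cite[Theorem~1.13]{trans}). Its hypotheses hold because every polynomial that occurs has an escaping critical point at the fixed embedding, hence is non-exceptional. Its sign conclusion then forces some $\sigma_1(f)$ to be intertwined with some $\tau_1(g)$, where $\sigma_1,\tau_1\in\Gal(\overline{\Q}/\Q)$.

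With this substitution, your Steps~1 and~3 give a genuinely different valid proof. It mirrors the proof of Theorem~\ref{thmmain} more literally: all archimedean places are treated at once as Galois conjugates at the fixed place, and the non-archimedean places by your elementary computation. It avoids the $\sim_w$/$\sT_d$ machinery, at the cost of redoing the local decomposition that Proposition~\ref{transht} packages. The paper's route is shorter.

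Finally, to transport intertwining by $\sigma_1^{-1}$, do not rely on the assertion that the invariant curve is defined over $\overline{\Q}$. Instead, extend $\sigma_1^{-1}$ to a field automorphism of $\C$ and apply it to the relation \eqref{intwdef2}.
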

We give the proof of Theorem~\ref{thmht} in \S~\ref{secpf}, which is almost the same as the proof of Theorem~\ref{thmmain}.

\subsection{Application to multiplier spectra}\label{secmul}
Let $d$ be an integer at least two. Let $\Poly^d$ be the irreducible affine (algebraic) variety of degree $d$ polynomials, which is isomorphic to $(\A^1\setminus\{0\})\times\A^d$. The geometric quotient $\MPoly^d=\Poly^d/ \Aff$ is the moduli space of degree $d$ polynomials, where $\Aff$ is the group of degree one polynomials acting on $\Poly^d$ by conjugation. The moduli space $\MPoly^d$ is an irreducible affine variety defined over $\Q$. See \cite[\S~1]{JX23} and \cite[\S~2.1]{VH2412}. The Lyapunov exponent is invariant under (affine) conjugacy \cite[Remark~6.46]{Silverman2012}. Then we obtain a map
$$\Lyap_d:\MPoly^d(\C)\to[\log(d),\infty),[f]\mapsto\sL_f,$$
which is continuous with respect to the complex topology on $\MPoly^d(\C)$ (cf. \cite[Theorem~86]{Berteloot} and \cite[Theorem~B]{Mane88}). The map $\Lyap_d$ is surjective, i.e., $\Lyap_d(\MPoly^d(\C))=[\log(d),\infty)$ (see Remark~\ref{rmkcond}).

Let $f\in\C[z]$ be a polynomial of degree $d$. For an $f$-periodic point $z_0\in\P^1(\C)$ with exact period $n\in\Z_{>0}$, the \emph{multiplier} of $f$ at $z_0$ is defined as the differential
$$\rho_f(z_0):=df^{\circ n}(z_0)\in\C,$$
and the \emph{length} of $f$ at $z_0$ is the modulus $\left|\rho_f(z_0)\right|\in\R_{\geq0}$. Note that the multiplier of $f$ at the superattracting fixed point $\infty$ is $\rho_f(\infty)=0$.

For $n\in\Z_{>0}$, let $S_n(f)$ (resp. $L_n(f)$) be the multiset of multipliers (resp. lengths) of $n$-periodic points of $f$ in $\P^1(\C)$, whose cardinality equals $d^n+1$. The \emph{multiplier spectrum} (resp. \emph{length spectrum}) of $f$ is the sequence
$$S(f)=(S_n(f))_{n=1}^\infty\text{ (resp. }L(f)=(L_n(f))_{n=1}^\infty\text{)}$$
of multisets. The length spectrum $L(f)$ (hence $S(f)$) can determine the Lyapunov exponent $\sL_f$. Note that $S_n(f)$, $S(f)$, $L_n(f)$, and $L(f)$ depend only on the conjugacy class of $f$; hence they induce maps on $\MPoly^d(\C)$, still denoted by $S_n$, $S$, $L_n$, and $L$. The first- and second-named authors, Ji and Xie, constructed the \emph{multiplier spectrum morphism}
$$\tilde{\tau}_d:\MPoly^d\to\A^M$$
on $\MPoly^d$, where $M=M(d)\geq1$ is an integer depending only on $d$. The map $\tilde{\tau}_d$ is a morphism between algebraic varieties (defined over $\Q$) such that $\tilde{\tau}_d([f])=\tilde{\tau}_d([g])$ if and only if $S([f])=S([g])$, for all $[f],[g]\in\MPoly^d(\C)$. See \cite{JX23} for more details. The morphism $\tilde{\tau}_d$ is generically injective:
\begin{Thm}[{\cite[Theorem~1.4]{JX23}}]\label{thmgeninj}
	For every integer $d\geq2$, the morphism $\tilde{\tau}_d$ is generically injective: there exist a non-empty Zariski open subset $U$ of $\MPoly^d(\C)$ and a Zariski open subset $W$ of the Zariski closure of $\tilde{\tau}_d(U)$ with $\tilde{\tau}_d^{-1}(W)=U$ such that $\tilde{\tau}_d:U\to W$ is a finite \'etale morphism of degree $1$.
\end{Thm}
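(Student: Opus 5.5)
The plan is to derive Theorem~\ref{thmgeninj} from Theorem~\ref{thmmain}, combined with the rigidity of intertwining for general polynomials (\cite[Theorems~3.51 and~3.52]{FG22}) and a few standard facts about morphisms defined over $\Q$.

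\textbf{Setup.} Since $\tilde{\tau}_d$ is defined over $\Q$, for every $\sigma\in\Gal(\overline{\Q}/\Q)$ and $[f]\in\MPoly^d(\overline{\Q})$ we have $\tilde{\tau}_d([\sigma(f)])=\sigma(\tilde{\tau}_d([f]))$. For complex conjugation and arbitrary $[f]\in\MPoly^d(\C)$, the same holds in the form $\tilde{\tau}_d([\overline{f}])=\overline{\tilde{\tau}_d([f])}$. The length spectrum determines $\sL_f$. Hence $\tilde{\tau}_d([f])=\tilde{\tau}_d([g])$ implies $\sL_f=\sL_g$.

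\textbf{Choice of a good point.} We choose $[f]\in\MPoly^d(\overline{\Q})$ satisfying the following conditions.
\begin{enumerate}
\item[(i)] $[f]$ lies in the Zariski open dense set $V$ of \cite[Theorems~3.51 and~3.52]{FG22}. On $V$, every $g$ intertwined with $f$ is conjugate to $f$.
\item[(ii)] $J(f)$ is disconnected.
\item[(iii)] $\tilde{\tau}_d([f])\notin\R^M$.
\end{enumerate}
Condition (ii) is open in the complex topology and nonempty, for instance for $z^d+c$ with $|c|$ large. Condition (iii) is also open. It is nonempty on any nonempty open set because $\tilde{\tau}_d$ is a nonconstant holomorphic map: already the fixed-point multipliers vary. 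Indeed, a holomorphic map whose coordinates are all real on an open set is locally constant. So (ii) and (iii) define a nonempty open subset of $V(\C)$ in the complex topology. Since $\MPoly^d$ is defined over $\Q$, its $\overline{\Q}$-points are dense there, and we may take $[f]$ algebraic.

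\textbf{Computing the fiber.} Let $[g]\in\MPoly^d(\C)$ satisfy $\tilde{\tau}_d([g])=\tilde{\tau}_d([f])$. We first show that $[g]$ is algebraic. The fiber $\tilde{\tau}_d^{-1}(\tilde{\tau}_d([f]))$ is a subvariety defined over $\overline{\Q}$. If it had positive dimension, it would contain infinitely many $\overline{\Q}$-points. We now show that every $\overline{\Q}$-point in it equals $[f]$, which forces the fiber to be finite, and then all its points are algebraic. So take $[g]$ algebraic in the fiber. Then $\sL_g=\sL_f>\log(d)$, and Theorem~\ref{thmmain} shows that $f$ is intertwined with $g$ or with $\overline{g}$. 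By (i), $[g]=[f]$ or $[\overline{g}]=[f]$. In the latter case,
$$\tilde{\tau}_d([f])=\tilde{\tau}_d([g])=\tilde{\tau}_d([\overline{f}])=\overline{\tilde{\tau}_d([f])},$$
which contradicts (iii). Hence the fiber over $\tilde{\tau}_d([f])$ is exactly $\{[f]\}$.

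\textbf{Spreading out.} The points $[f]$ chosen above are dense in the complex topology within a nonempty open set, so they are Zariski dense in $\MPoly^d$. By upper semicontinuity of fiber dimension, $\tilde{\tau}_d$ is generically quasi-finite, and hence generically finite onto the Zariski closure of its image. The generic fiber cardinality equals the separable degree of the function field extension. This is the full degree, since we are in characteristic $0$. The set of points with maximal fiber cardinality contains a Zariski open dense set, which meets our Zariski dense set of good points. Therefore the degree is $1$, and $\tilde{\tau}_d$ is birational onto its image. Shrinking to an open $W$ over which $\tilde{\tau}_d$ is an isomorphism, and setting $U=\tilde{\tau}_d^{-1}(W)$, gives the finite étale degree-$1$ map $U\to W$.

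\textbf{Main obstacle.} The delicate step is the complex-conjugation ambiguity in Theorem~\ref{thmmain}: the Lyapunov exponent alone cannot separate $f$ from $\overline{f}$. It is resolved by condition (iii), that is, by using the full multiplier spectrum rather than only $\sL_f$. A secondary point to verify carefully is that $\tilde{\tau}_d$ is nonconstant near generic points, so that the real locus of $\tilde{\tau}_d$ has empty interior.
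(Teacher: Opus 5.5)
Your proof is correct, but it takes a different route from the paper's.

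\textbf{The paper's route.} It argues by contradiction. It assumes the generic degree $m$ of $\tilde{\tau}_d$ is at least $2$, taking generic finiteness from the literature. It then imports from \cite{JX23} two non-isotrivial families $h_1\neq h_2$ over a curve with $\tilde{\tau}_d\circ h_1=\tilde{\tau}_d\circ h_2$. Theorem~\ref{thmmain} is applied at the $\overline{\Q}$-points of the resulting curve $E$ lying off $\Man_d^2$, which places them on the graph of complex conjugation. After passing to the closure, conjugation would be holomorphic on a one-dimensional complex manifold, which is the contradiction.

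\textbf{Your route.} You work one fiber at a time. Choosing $[f]$ with $\tilde{\tau}_d([f])\notin\R^M$ rules out the alternative $[g]=[\overline{f}]$ directly, via $\tilde{\tau}_d([\overline{f}])=\overline{\tilde{\tau}_d([f])}$. You replace the density-and-closure step by observing that the fiber over the algebraic point $\tilde{\tau}_d([f])$ is defined over $\overline{\Q}$, so its $\overline{\Q}$-points determine it. The holomorphic versus anti-holomorphic incompatibility still appears, but only in a mild form. A nonconstant regular function cannot be real-valued on a Euclidean open set; this is cleanest to check after pulling back to the smooth variety $\Poly^d$.

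\textbf{What each approach buys.} Your argument is shorter and more self-contained. It needs neither the curve construction of \cite{JX23} nor conjugation-invariance of $U$. It derives generic finiteness instead of assuming it. It also gives a stronger pointwise statement: every good algebraic point of a Euclidean open set has a singleton fiber.

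The price is that you need the one-sided form of the Favre--Gauthier rigidity. That is, every $g$ intertwined with a general $f$ is conjugate to $f$, with no genericity assumption on $g$. You need this because a point of the fiber need not lie in the generic locus a priori. The paper's proof only uses the symmetric form for pairs in $U\times U$ with $U$ conjugation-invariant. However, the one-sided form is exactly what the paper records in its Remark after Theorem~\ref{thmmain}, citing \cite[Theorems~3.51 and~3.52]{FG22}, so your use of it is legitimate.
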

Huguin \cite{VH2412} proved Theorem~\ref{thmgeninj} using completely different methods. The third-named author, Zhang, provided a detailed proof of Theorem~\ref{thmgeninj} in \cite{PolyMulti}, based on the ideas of Ji-Xie \cite{JX23} and Pakovich \cite{Paksimple,Pakdeg23}.

As an application of our main results, in \S~\ref{secpf} we show that Theorem~\ref{thmmain} implies Theorem~\ref{thmgeninj}. Thus, we obtain a new proof of Theorem~\ref{thmgeninj}.

We remark that the above constructions work for rational maps $f\in\C(z)$ of degree $d$. The analogous result \cite[Theorem~1.3]{JX23} of Theorem~\ref{thmgeninj} for rational maps is a generalization of a remarkable theorem of McMullen \cite{McMullen1987}.

\section{Preliminaries}
In \S~\ref{secLya}, we recall basic properties of Lyapunov exponents and Green functions, and deal with the ``easy'' direction of our main theorems. In \S~\ref{secBott}, we briefly recall the notion of B{\"o}ttcher coordinates for polynomials. In \S~\ref{secintw}, an equivalent definition of intertwined relation for non-exceptional polynomials is considered.

\subsection{Lyapunov exponents and Green functions}\label{secLya}
Let $f(z)=a_dz^d+\cdots+a_1z+a_0\in\C[z]$ be a polynomial of degree $d\geq2$.

The \emph{Green function} (also called the \emph{escape rate function}) of $f$ on $\C$ is the continuous subharmonic function $g_f:\C\to\R_{\geq0}$ defined by
$$g_f(z)=\lim\limits_{n\to\infty}\frac{1}{d^n}\log^+\left|f^{\circ n}(z)\right|,$$
where $\log^+=\max\{0,\log\}$. It satisfies $g_f(f(z))=dg_f(z)$ for $z\in\C$. The zero locus $g_f^{-1}(0)$ equals the \emph{filled Julia set}
$$K(f)=\{z\in\C:(f^{\circ n}(z))_{n\geq1}\text{ is bounded}\}$$
of $f$, which is compact in $\C$. Then the Lyapunov exponent $\sL_f$ of $f$ can be computed by Przytycki's formula:
\begin{equation}\label{LyapGreen}
	\sL_f=\log(d)+\sum_{\{c\in\C:f^\prime(c)=0\}}g_f(c),
\end{equation}
where the critical points are counted with multiplicity; see \cite{Prz85,Man84,Mane88}. By \eqref{LyapGreen} and \cite[Theorem~9.5]{Milnor}, $\sL_f\geq\log(d)$ and the following statements are equivalent:
\begin{itemize}
	\item $\sL_f=\log(d)$;
	\item all finite critical points of $f$ belong to the filled Julia set $K(f)$;
	\item $K(f)$ is connected;
	\item $J(f)$ is connected.
\end{itemize}

The polynomial $f$ is called \emph{exceptional} if it is (affinely) conjugate to $z^d$ or $\pm T_d(z)$, where $T_d(z)$ is the (normalized) Chebyshev polynomial of degree $d$, i.e., the unique monic polynomial of degree $d$ satisfying $T_d(z+z^{-1})=z^d+z^{-d}$. See \cite[Definition~1.1]{PolyMulti}. If $f$ is exceptional, then its Lyapunov exponent $\sL_f$ attains the minimum $\sL_f=\log(d)$ (cf. \cite[\S~2]{zdunik2014characteristic}).

The ``if'' direction of Theorem~\ref{thmmain} and Theorem~\ref{thmht} follows from the following two easy lemmas:
\begin{Lem}\label{intwLyapsame}
	Let $f,g\in\C[z]$ be polynomials of degree $d\geq2$. If $f$ and $\hat{g}$ are intertwined for some $\hat{g}\in\{g,\overline{g}\}$, then $\sL_f=\sL_g$.
\end{Lem}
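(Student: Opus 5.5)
The plan is to prove the lemma in two independent steps: first show that $\sL_{\overline{g}}=\sL_g$, and then show that intertwined polynomials have equal Lyapunov exponents. Together these give the claim.

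\emph{Step 1: complex conjugation.} Let $c(z)=\bar z$, an anti-holomorphic involution of $\P^1(\C)$. We have $\overline{g}=c\circ g\circ c$, so $c$ carries $K(g)$ onto $K(\overline{g})$ and $J(g)$ onto $J(\overline{g})$. Directly from the defining limit, $g_{\overline{g}}(\bar z)=g_g(z)$. The critical points of $\overline{g}$ are the conjugates of those of $g$, with the same multiplicities. Przytycki's formula \eqref{LyapGreen} then gives $\sL_{\overline{g}}=\sL_g$. Alternatively, $c_*\mu_g=\mu_{\overline{g}}$ by uniqueness of the maximal entropy measure, and $|\overline{g}'(\bar z)|=|g'(z)|$.

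\emph{Step 2: intertwining.} Suppose $f$ and $g$ are intertwined. By \eqref{intwdef2} there are nonconstant $R,h_1,h_2$ and $n\ge1$ with $f^{\circ n}\circ h_1=h_1\circ R$ and $g^{\circ n}\circ h_2=h_2\circ R$; note $\deg R=d^n$. Since $\sL_{f^{\circ n}}=n\sL_f$ (the chain rule plus invariance of $\mu_f=\mu_{f^{\circ n}}$), it suffices to prove the following: if $F\circ h=h\circ R$ with $F,R,h$ polynomials, $\deg F=\deg R=D\ge2$ and $h$ nonconstant, then $\sL_F=\sL_R$. Applying this to both semiconjugacies gives $n\sL_f=\sL_R=n\sL_g$.

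\emph{Proof of the reduction.} I would argue with Green functions. Since $h$ is a polynomial of degree $e\ge1$, we have $\log^+|h(w)|=e\log^+|w|+O(1)$ as $w\to\infty$, which yields
$$g_F(h(w))=\lim_k D^{-k}\log^+|h(R^{\circ k}(w))|=e\,g_R(w).$$
Differentiating the semiconjugacy gives $F'(h(w))\,h'(w)=h'(R(w))\,R'(w)$. I would compare the two critical sums via the Riesz measures: $\frac{1}{2\pi}\Delta g_F=\mu_F$ and $h^*\mu_F=e\,\mu_R$. Then
$$\sL_F=\int\log|F'|\,d\mu_F=\frac1e\int\log|F'\circ h|\,d(h^*\mu_F)=\int \log|F'\circ h|\,d\mu_R,$$
and by the derivative identity this equals
$$\int\bigl(\log|h'\circ R|-\log|h'|+\log|R'|\bigr)\,d\mu_R .$$
By $R$-invariance of $\mu_R$, the terms $\int\log|h'\circ R|\,d\mu_R$ and $\int\log|h'|\,d\mu_R$ cancel, provided $\log|h'|\in L^1(\mu_R)$. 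That integrability holds because $\mu_R$ has continuous potential and so charges no polar set. Hence $\sL_F=\sL_R$.

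\emph{Main obstacle.} The delicate points are the identity $h^*\mu_F=e\mu_R$ (equivalently, the change of variables for the pushforward $h_*\mu_R=\mu_F$) and the integrability just mentioned. The first follows from $g_F\circ h=e\,g_R$ by taking Laplacians, since pullback of subharmonic functions commutes with $dd^c$. The second is standard, and I expect both to be routine rather than genuine difficulties.
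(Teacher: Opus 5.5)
Your proof is correct. Step 1 is the same as the paper's: Przytycki's formula \eqref{LyapGreen} together with $g_{\overline{g}}(\bar z)=g_g(z)$.

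The difference is in Step 2. The paper does not prove that intertwined polynomials have equal Lyapunov exponents; it simply cites \cite[Lemma~3.48]{FG22}. You prove it directly:
\begin{itemize}
\item you pass to the common semiconjugacy target $R$ in \eqref{intwdef2};
\item you obtain $h_*\mu_R=\mu_F$ from $g_F\circ h=e\,g_R$;
\item you cancel the coboundary $\log|h'\circ R|-\log|h'|$ using $R$-invariance of $\mu_R$.
\end{itemize}
The citation buys brevity. Your route makes the lemma self-contained, and it actually shows a slightly more general fact: $\sL$ is preserved under any polynomial semiconjugacy $F\circ h=h\circ R$ with $\deg F=\deg R$.

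One justification needs repair. The fact that $\mu_R$ ``charges no polar set'' does not by itself give $\log|h'|\in L^1(\mu_R)$. For example, the measure with density proportional to $|w|^{-2}\log^{-2}(1/|w|)$ on $\{|w|<1/2\}$ charges no polar set, yet has $\int\log|w|\,d\mu(w)=-\infty$. What you need is what continuity of the potential actually gives: $\int\log|z-w|\,d\mu_R(w)=g_R(z)+\mathrm{const}$ is finite at every $z$, in particular at each zero of $h'$.

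The same computation can also be done with no measure theory, in the spirit of the paper's use of \eqref{LyapGreen}. Evaluate $g_R$ on both sides of the divisor identity $\mathrm{div}(F'\circ h)+\mathrm{div}(h')=\mathrm{div}(h'\circ R)+\mathrm{div}(R')$.
\begin{itemize}
\item Since $g_F\circ h=e\,g_R$, the first term contributes $\sum_{F'(c)=0}g_F(c)$.
\item Since $g_R\circ R=D\,g_R$, the contribution of $\mathrm{div}(h'\circ R)$ equals that of $\mathrm{div}(h')$.
\end{itemize}
Hence $\sum_{F'(c)=0}g_F(c)=\sum_{R'(w)=0}g_R(w)$, and $\sL_F=\sL_R$ follows from Przytycki's formula because $\deg F=\deg R$.
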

\begin{proof}
	By \eqref{LyapGreen} and the definition of Green functions, it is easy to see that $\sL_g=\sL_{\overline{g}}$. Hence we may assume that $f$ and $g$ are intertwined, and the conclusion follows from \cite[Lemma~3.48]{FG22}.
\end{proof}
\begin{Lem}\label{intwCrithtsame}
	Let $f,g\in\overline{\Q}[z]$ be polynomials of degree $d\geq2$. If $f$ and $\sigma(g)$ are intertwined for some $\sigma\in\Gal(\overline{\Q}/\Q)$, then $\hat{h}_\crit(f)=\hat{h}_\crit(g)$.
\end{Lem}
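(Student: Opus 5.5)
The plan is to reduce to the case $\sigma=\mathrm{id}$ and then transfer the local argument behind Lemma~\ref{intwLyapsame} to every place of a number field.

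\emph{Reduction.} The critical height is Galois invariant: $\hat{h}_\crit(\sigma(g))=\hat{h}_\crit(g)$. Indeed, $\hat{h}_{\sigma(g)}(\sigma(a))=\hat{h}_g(a)$, because the Weil height satisfies $h(\sigma(b))=h(b)$ and $\sigma(g^{\circ n}(a))=\sigma(g)^{\circ n}(\sigma(a))$. Moreover, $\sigma$ maps the critical points of $g$, with multiplicities, bijectively onto those of $\sigma(g)$. So replacing $g$ by $\sigma(g)$, it suffices to show that $f$ and $g$ intertwined implies $\hat{h}_\crit(f)=\hat{h}_\crit(g)$.

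\emph{Main step.} Use the semiconjugacy description \eqref{intwdef2}: there are $R,h_1,h_2$ and $n$ with $f^{\circ n}\circ h_1=h_1\circ R$ and $g^{\circ n}\circ h_2=h_2\circ R$. I first want to arrange that $R,h_1,h_2$ lie in $\overline{\Q}[z]$. The invariant curve $Z$ in Definition~\ref{intwdef} can be taken over $\overline{\Q}$, since invariant curves of a $\overline{\Q}$-endomorphism form countably many algebraic families and rigidity gives finiteness. One can then either run the construction of \cite[Theorem~3.39]{FG22} over $\overline{\Q}$, or specialize. Since $\hat{h}_\crit(f^{\circ n})=n\,\hat{h}_\crit(f)$-type identities hold, I would rather not pass to iterates and instead work with local heights directly.

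Fix a number field $K$ containing all coefficients. For each place $v$ of $K$, let $G_{f,v}$ be the $v$-adic escape rate, so that $\hat{h}_f(a)=\sum_v \frac{[K_v:\Q_v]}{[K:\Q]}G_{f,v}(a)$. By Lemma~\ref{intwLyapsame}, at each archimedean place (applied to the embedding given by $v$) we get the local identity
$$\sum_{c\in\crit(f)}G_{f,v}(c)=\sum_{c\in\crit(g)}G_{g,v}(c).$$
The same identity should hold at every non-archimedean place, since the proof of \cite[Lemma~3.48]{FG22} is potential-theoretic. It compares the quantity $\sum_{\crit}G_v=\sL_{f,v}-\log|d|_v$ through the semiconjugacies: the Lyapunov exponent of $R$ computed via $h_1$ and via $h_2$ agree. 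Specifically, I would use the chain rule on $f^{\circ n}\circ h_1=h_1\circ R$, together with the facts that $h_{1*}\mu_R=\mu_{f}$ and $\int\log|h_1'|\,d\mu_R$ cancels. This gives $\sL_{R,v}=n\sL_{f,v}$, and similarly $\sL_{R,v}=n\sL_{g,v}$. On Berkovich space the same formulas hold, together with Przytycki's formula in the form $\sL_{f,v}=\log|d|_v+\sum_{\crit}G_{f,v}(c)$ (the non-archimedean version due to Okuyama and Favre–Rivera-Letelier). Summing the local identities over $v$ with the usual weights, the $\log|d|_v$ terms cancel on both sides, which yields $\hat{h}_\crit(f)=\hat{h}_\crit(g)$.

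\emph{Main obstacle.} The delicate step is justifying the non-archimedean Lyapunov identity $\sL_{R,v}=n\sL_{f,v}$ via the semiconjugacy, and ensuring $R,h_i$ are defined over $\overline{\Q}$. If the latter is troublesome, a fallback avoids $R$ entirely. Ingram-type formulas express $\hat{h}_\crit$ through $\sum_v\sum_{\crit}G_v$, so it suffices to show that $\mu_{f,v}$ and $\mu_{g,v}$ are related by pushforward along the two projections of $Z$ at every place. This uses the invariant curve $Z$ over $\overline{\Q}$ and the equidistribution-of-small-points argument: $f$ and $g$ intertwined forces $\hat{h}_f\circ p_1$ and $\hat{h}_g\circ p_2$ to agree on $Z$ up to bounded error, and hence to agree exactly by invariance.
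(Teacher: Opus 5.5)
Your proposal is correct and takes the same route as the paper. You reduce to $\sigma=\mathrm{id}$ by Galois invariance of the Weil height. You then combine, place by place, three ingredients: the equality of local Lyapunov exponents for intertwined polynomials (\cite[Lemma~3.48]{FG22}, which you reprove via the chain rule on $f^{\circ n}\circ h_1=h_1\circ R$ and $h_{1*}\mu_R=\mu_f$), Okuyama's Przytycki-type formula $\sL_{f,v}=\log|d|_v+\sum_{\crit}G_{f,v}$, and the decomposition of $\hat{h}_f$ into local heights.

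The rationality issue you flag is harmless. Polynomials $R,h_1,h_2$ of fixed degrees solving \eqref{intwdef2} form a $\overline{\Q}$-variety with a $\C$-point, so they have $\overline{\Q}$-solutions; alternatively, you can embed the finitely generated coefficient field into $\C_v$. So your fallback via invariant curves and equidistribution is not needed.
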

\begin{proof}
	By the definition of $\hat{h}_\crit$ and the Galois invariance of the (absolute logarithmic) height $h$, it is easy to see that $\hat{h}_\crit(g)=\hat{h}_\crit(\sigma(g))$ for all $\sigma\in\Gal(\overline{\Q}/\Q)$. Hence we may assume that $f$ and $g$ are intertwined, and the conclusion follows from the analogue of \eqref{LyapGreen} over any metrized field of characteristic zero (Okuyama \cite[\S~5]{Okuyama}), \cite[Lemma~3.48]{FG22}, and decompositions of canonical heights into local canonical heights \cite[Theorem~3.29]{Silverman2007}.
\end{proof}

\subsection{B{\"o}ttcher coordinates}\label{secBott}
A \emph{B{\"o}ttcher coordinate} of $f$ (at infinity) is a Laurent series $\phi_f(z)\in\C((z^{-1}))$ satisfying
$$(\phi_f\circ f)(z)=(z^d\circ\phi_f)(z)$$
and having a simple pole at infinity, i.e., it takes the form
$$\phi_f(z)=b_1z+b_0+b_{-1}z^{-1}+b_{-2}z^{-2}+\cdots.$$
It exists and is unique up to multiplication by a $(d-1)$-th root of unity in $\C$ (see \cite[Proposition~4.1 and Lemma~4.4]{trans}). From now on, we use $\phi_f$ to denote a fixed B{\"o}ttcher coordinate of $f$. There is a constant $B(f)>0$ such that $\phi_f$ converges in the punctured neighborhood of infinity
$$\Sigma(f):=\{z\in\C:\left|z\right|>B(f)\}$$
and $f(\Sigma(f))\subseteq\Sigma(f)$ (cf. \cite[Proposition~4.3]{trans}). We may assume that $\Sigma(f)\cap K(f)=\emptyset$ and $B(f)=B(\overline{f})$. Then $\Sigma(f)=\Sigma(\overline{f})=\overline{\Sigma(f)}$. By \cite[Proposition~2.22]{FG22} (after enlarging $B(f)$ if necessary), the B{\"o}ttcher coordinate and Green function of $f$ are related by
\begin{equation}\label{BottGreen}
	g_f(z)=\log\left|\phi_f(z)\right|,\ z\in\Sigma(f).
\end{equation}
Note that $\phi_f(z)\neq0$ for every $z\in\Sigma(f)$. See \cite[\S~2.5]{FG22} and \cite[\S~4.1]{trans} for a detailed treatment of B{\"o}ttcher coordinates.

\subsection{The intertwined relation for non-exceptional polynomials}\label{secintw}
The article \cite{trans} involves an equivalence relation $\sim$ between rational maps, whose definition is slightly different from Definition \ref{intwdef} for the polynomial case:
\begin{Def}[{\cite[\S~3.1]{trans}}]
	Let $f,g\in\C(z)$ be rational maps of degree at least $2$. We say that $f$ and $g$ are \emph{semi-equivalent} and write $f\sim g$ if there exists an irreducible algebraic curve $Z\subset\P^1\times\P^1$ over $\C$ whose projections to both factors $\P^1$ are finite, such that $Z$ is periodic under the endomorphism $$(f,g):\P^1\times\P^1\to\P^1\times\P^1,(x,y)\mapsto(f(x),g(y)).$$
\end{Def}
\begin{Rem}\label{rmksim}
	Let $f,g\in\C(z)$ be rational maps of degree $\geq2$. If $f\sim g$, then $\deg(f)=\deg(g)$ (see \cite[Remark~3.1]{trans}). By \cite[\S~3.1]{trans} and the Riemann-Hurwitz formula, $f\sim g$ if and only if there exist non-constant rational maps $R,h_1,h_2\in\C(z)\setminus\C$ and $n\in\Z_{>0}$ such that
	\begin{equation}
		f^{\circ n}\circ h_1=h_1\circ R\quad\text{and}\quad g^{\circ n}\circ h_2=h_2\circ R.
	\end{equation}
\end{Rem}

We show that $\sim$ coincides with the intertwined relation for non-exceptional polynomials:
\begin{Lem}\label{simintw}
	Let $f,g\in\C[z]$ be non-exceptional polynomials of degree $\geq2$. Then $f\sim g$ if and only if $f$ and $g$ are intertwined.
\end{Lem}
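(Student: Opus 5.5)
Both relations admit semiconjugacy characterizations: \eqref{intwdef2} with polynomials $R,h_1,h_2$ for ``intertwined'' (by \cite[Theorem~3.39]{FG22}), and Remark~\ref{rmksim} with rational $R,h_1,h_2$ for $\sim$. The plan is to compare these two characterizations. The implication ``intertwined $\Rightarrow f\sim g$'' is almost formal. Polynomials are rational maps, so \eqref{intwdef2} immediately gives the data required in Remark~\ref{rmksim}. Alternatively, one can argue with curves directly. Take an irreducible component $Z_0$ of the invariant curve $Z\subset\A^2$ that surjects onto both axes; such a component exists after passing to an iterate, since $(f,g)$ permutes the components and some component dominates the first factor. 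This $Z_0$ is periodic under $(f,g)$. Its closure in $\P^1\times\P^1$ then has finite projections, because a non-constant projection from an irreducible curve is finite.

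For the converse, assume $f\sim g$. By Remark~\ref{rmksim} there are rational $R,h_1,h_2$ and $n$ with $f^{\circ n}\circ h_1=h_1\circ R$ and $g^{\circ n}\circ h_2=h_2\circ R$. The goal is to replace these by polynomial data. Since $f^{\circ n}$ is a polynomial, $\infty$ is a totally invariant point of it. Set $E=h_1^{-1}(\infty)$. Then $R^{-1}(E)=h_1^{-1}((f^{\circ n})^{-1}(\infty))=E$, so $E$ is a finite completely invariant set for $R$. Therefore $E$ lies in the exceptional set of $R$, which has at most two points.

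If $R$ is exceptional in the sense of having $|E_R|=2$, then $R$ is conjugate to $z^{\pm d^n}$. In that case $f^{\circ n}$ is semiconjugate to a power map, and by Ritt/Milnor-type results on Lattès/exceptional semiconjugacies, $f^{\circ n}$ would be exceptional. Hence $f$ would be exceptional (conjugate to a power or Chebyshev map), which is excluded by hypothesis. A similar argument applies if $E$ has two points.

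So $E=\{p\}$ is a single totally invariant point of $R$, and $h_1^{-1}(\infty)=\{p\}$. Running the same argument with $g$ and $h_2$ gives $h_2^{-1}(\infty)=\{p'\}$. The point $p'$ is also exceptional for $R$; since $R$ is not exceptional, it has at most one exceptional point, so $p'=p$. Conjugating by a Möbius map sending $p$ to $\infty$ makes $R$ a polynomial and makes $h_1,h_2$ rational functions whose only pole is at $\infty$, that is, polynomials. This produces \eqref{intwdef2}, and \cite[Theorem~3.39]{FG22} then shows that $f$ and $g$ are intertwined.

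The main obstacle is the step excluding the case where $R$ has two exceptional points, i.e. $R$ conjugate to $z^{\pm m}$. There I would invoke the classification of semiconjugacies: if $f^{\circ n}\circ h_1=h_1\circ z^{m}$, then $f^{\circ n}$ is a power map or a Chebyshev map up to conjugacy (Ritt; cf. \cite{FG22} or \cite{PolyMulti}). Then $f$ itself is exceptional, because an iterate of a non-exceptional polynomial is non-exceptional, for instance since its Julia set is not a circle or an interval. Alternatively, if an equivalent statement is available in \cite[\S~3.1]{trans} or \cite[\S~3]{FG22}, I would cite it directly rather than rederive it.
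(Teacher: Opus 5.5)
Your proposal is correct. Once the rational data $f^{\circ n}\circ h_1=h_1\circ R$, $g^{\circ n}\circ h_2=h_2\circ R$ are in hand, it is the paper's argument: $h_i^{-1}(\infty)$ is a nonempty totally invariant set of $R$, the two-point case makes $R$ conjugate to $z^{\pm d^n}$ and forces $f$ to be exceptional, and the one-point case lets you conjugate $R,h_1,h_2$ into polynomials. The only difference is that the paper does not invoke Remark~\ref{rmksim}; it produces these data itself by normalizing the periodic curve $Z$ (genus $\leq 1$ by Riemann--Hurwitz) and discarding genus one because a Latt\`es map has Julia set $\P^1$ and so cannot be a polynomial --- this exclusion is exactly what your appeal to the remark uses implicitly, since in the genus-one case the curve does not directly give you rational $h_1,h_2$.
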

\begin{proof}
	The ``if'' direction is trivial.
	
	It suffices to prove the ``only if'' direction. Let $f,g\in\C[z]$ be non-exceptional polynomials of degree $\geq2$ such that $f\sim g$. Set $d:=\deg(f)=\deg(g)\geq2$ (see Remark~\ref{rmksim}). Let $Z\subset\P^1\times\P^1$ be an irreducible complex algebraic curve whose projections to both factors $\P^1$ are finite, such that $Z$ is periodic under the endomorphism $(f,g):\P^1\times\P^1\to\P^1\times\P^1$. Take an integer $l\geq1$ such that $Z$ is $(f^{\circ l},g^{\circ l})$-invariant. Since $f$ and $g$ are non-exceptional, the iterates $f^{\circ l}$ and $g^{\circ l}$ are also non-exceptional (cf. \cite{Zdunik90}). We may assume that $l=1$, i.e., $Z$ is $(f,g)$-invariant.
	
	Let
	$$\nu:\hat{Z}\to Z$$
	be the normalization of the irreducible projective curve $Z$. The restriction of the endomorphism $(f,g)$ to $Z$ lifts to $\hat{Z}$ and defines a non-invertible endomorphism $F:\hat{Z}\to\hat{Z}$. Since $\deg(F)=\deg(f)=d\geq2$, by the Riemann-Hurwitz formula, $\hat{Z}$ has genus $g(\hat{Z})\leq1$.
	
	Assume first that $g(\hat{Z})=1$. Then $\hat{Z}$ is an elliptic curve. Since
	$$f\circ(\pi_1\circ\nu)=(\pi_1\circ\nu)\circ F,$$
	the polynomial $f$ is a Latt\`es map (cf. \cite{milnor2006lattes}), which is a contradiction because any Latt\`es map has Julia set $\P^1(\C)$.
	
	Assume that $g(\hat{Z})=0$. Then $\hat{Z}\cong\P^1$. We identify $\hat{Z}$ with $\P^1$, so that
	$F$, $h_1:=\pi_1\circ\nu$, and $h_2:=\pi_2\circ\nu$ are non-constant rational maps in $\C(z)\setminus\C$. Set
	$$C:=h_2^{-1}(\infty)\subset \hat{Z}(\C)=\P^1(\C).$$
	Then $C$ is a non-empty finite set because $h_2\in\C(z)\setminus\C$. Note that $C$ is totally $F$-invariant, i.e., $F^{-1}(C)=C=F(C)$. By \cite[Theorem~1.6]{Silverman2007}, we have $\#C\in\{1,2\}$.
	
	If $\#C=2$, then \cite[Theorem~1.6(b)]{Silverman2007} shows that $F$ is conjugate to $z^{\pm d}$ by a M\"obius transformation. It is well-known that $f$ (and $g$) must be exceptional polynomials (cf. \cite[Remark~1.5]{cyc} and \cite{milnor2006lattes}), a contradiction.
	
	Thus $\#C=1$. By \cite[Theorem~1.6(a)]{Silverman2007}, after conjugacy by M\"obius transformations, we may assume that $C=\{\infty\}$ and $F\in\C[z]$ is a polynomial. Since $h_2^{-1}(\infty)=C=\{\infty\}$, the map $h_2$ is also a polynomial. The polynomial $F$ is non-exceptional because $f$ is non-exceptional (cf. \cite[Remark~1.5]{cyc} and \cite{milnor2006lattes}). Let $C^\prime:=h_1^{-1}(\infty)\subset\P^1(\C)$. Then $C^\prime$ is a non-empty finite set which is totally $F$-invariant. Since $F$ is a non-exceptional polynomial, by \cite[Theorem~1.6]{Silverman2007} we conclude that $h_1^{-1}(\infty)=C^\prime=\{\infty\}$. Thus $h_1$ is also a polynomial.  Therefore, $f$ and $g$ are intertwined by \eqref{intwdef2}.
\end{proof}
\begin{Rem}
	However, Lemma~\ref{simintw} is false for exceptional polynomials. Let $d\geq2$ be an integer. By
	$$T_d\circ\left(z+\frac{1}{z}\right)=\left(z+\frac{1}{z}\right)\circ z^d,$$
	we see that $T_d(z)\sim z^d$. On the other hand, it is clear that $T_d(z)$ and $z^d$ are not conjugate, hence they are not intertwined by \cite[Theorem~3.39]{FG22}.
\end{Rem}

\section{Algebraicity of B{\"o}ttcher coordinates and multiplicative canonical heights}\label{secalg}
The second-named author studied the transcendence of products of B{\"o}ttcher coordinates in \cite{trans}. The following theorem is an easy consequence of \cite[Theorem~1.13]{trans}:
\begin{Thm}\label{transBott}
	Let $r\in\Z_{>0}$ and $f_1,\dots,f_r\in\overline{\Q}[z]$ be non-exceptional polynomials of degree $d\geq2$. For $1\leq i\leq r$, let $a_i\in\overline{\Q}\cap\Sigma(f_i)\subseteq\C\setminus K(f_i)$ and let $n_i\in\Z\setminus\{0\}$. Assume that
	$$\prod_{i=1}^r\phi_{f_i}(a_i)^{n_i}\in\overline{\Q}$$
	is algebraic. Then $r\geq2$ and there exist indices $1\leq i_+\neq i_-\leq r$ with $n_{i_+}\geq1$ and $n_{i_-}\leq-1$ such that $f_{i_+}$ and $f_{i_-}$ are intertwined.
\end{Thm}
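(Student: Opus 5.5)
We plan to derive Theorem~\ref{transBott} directly from \cite[Theorem~1.13]{trans}. That theorem is formulated for the semi-equivalence relation $\sim$ and for the rational-map setting. So the work is to translate its hypotheses and conclusion into the present notation, and then upgrade $\sim$ to intertwining via Lemma~\ref{simintw}.

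\textbf{Step 1: put the data in the form required by \cite[Theorem~1.13]{trans}.} The statement there concerns finitely many rational maps $f_i$ over $\overline{\Q}$ with B\"ottcher coordinates at a superattracting point, algebraic points $a_i$ in the convergence domain, and nonzero exponents $n_i$. We first group together equal pairs $(f_i,a_i)$, adding their exponents. If all exponents cancel after grouping, the product is trivially $1$. In that case some pair occurs with both a positive and a negative exponent, so we may take $f_{i_+}=f_{i_-}$; this polynomial is intertwined with itself via the diagonal, and then $r\ge2$ as well.

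We also handle the degenerate case $r=1$ separately, or let the cited theorem rule it out: it asserts that $\phi_{f}(a)^{n}$ is transcendental for non-exceptional $f$ and $a\in\overline{\Q}\cap\Sigma(f)$. The points lie in $\Sigma(f_i)$, where $\phi_{f_i}$ converges and is nonzero. Hence the products are well defined, matching the convergence hypothesis in \cite{trans}. We note also that the choice of root of unity in $\phi_{f_i}$ only changes the product by an algebraic factor, so algebraicity is independent of that choice.

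\textbf{Step 2: apply \cite[Theorem~1.13]{trans} to the product.} Under the algebraicity hypothesis, that theorem yields a relation among the terms. We read it in the form: there are indices $i_+\ne i_-$ with $n_{i_+}>0$ and $n_{i_-}<0$ such that $f_{i_+}\sim f_{i_-}$. If instead it only provides some $i\neq j$ with $f_i\sim f_j$, we refine as follows. Partition the indices into $\sim$-classes; $\sim$ is an equivalence relation by \cite[\S~3.1]{trans}. Restricted to a class with all exponents of one sign, the product would have to be algebraic by itself. We then iterate on the sub-products, which are algebraic, until we reach a class containing exponents of both signs.

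This sign-bookkeeping step is where I expect the main obstacle: one must check that the precise formulation in \cite{trans} (possibly with multiplicatively independent exponents, or phrased via a decomposition into classes) does give opposite signs within a single $\sim$-class. The intuition is that $|\phi_{f_i}(a_i)|=e^{g_{f_i}(a_i)}>1$, so a product with all exponents of the same sign cannot collapse. I would try to extract from \cite{trans} that each $\sim$-class sub-product is itself algebraic. Within a class whose exponents all have one sign, applying the $r=1$-type transcendence to the class then gives a contradiction.

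\textbf{Step 3: upgrade $\sim$ to intertwining.} Since each $f_i$ is a non-exceptional polynomial, Lemma~\ref{simintw} converts $f_{i_+}\sim f_{i_-}$ into the statement that $f_{i_+}$ and $f_{i_-}$ are intertwined. Finally, $r\ge2$ holds because $i_+\ne i_-$.
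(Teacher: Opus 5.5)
\textbf{There is a genuine gap.} Your overall route matches the paper's: invoke \cite[Theorem~1.13]{trans}, then upgrade $\sim$ to intertwining via Lemma~\ref{simintw}. But the step that carries all the content is left open in your Step~2, namely producing $i_+\neq i_-$ of opposite signs \emph{inside a single class}. The fallback you sketch does not close it.

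\begin{itemize}
\item \emph{Class-wise algebraicity is unjustified.} From ``the full product is algebraic'' and ``some $f_i\sim f_j$'' you cannot deduce that the sub-product over a $\sim$-class is algebraic. That would need multiplicative independence, modulo $\overline{\Q}^\times$, of B\"ottcher values attached to unrelated pairs. This is exactly the nontrivial content of \cite{trans}, and your iteration does not supply it.
\item \emph{The $r=1$ transcendence is the wrong tool for a one-signed class.} Such a class may contain several indices, so $r=1$ transcendence does not apply. What works is the root-of-unity conclusion: an algebraic product of this kind has modulus $1$, which contradicts $|\phi_{f_i}(a_i)|=e^{g_{f_i}(a_i)}>1$ for every $i$ in the class.
\end{itemize}

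\textbf{The missing idea is the precise conclusion of \cite[Theorem~1.13]{trans}.} If the product is algebraic, then it is a root of unity. Moreover, there are
\begin{itemize}
\item a partition $\{1,\dots,r\}=\sqcup_{j=1}^l J_j$ into non-empty blocks, determined by the pairs $(f_i,a_i)$ rather than by the maps alone (so it can be finer than the $\sim$-classes of the $f_i$), and
\item positive integers $d_{s/j}$,
\end{itemize}
such that $\sum_{s\in J_j}n_s d_{s/j}=0$ for every $j$.

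\textbf{How the paper finishes.} Every $d_{s/j}$ is positive and every $n_s$ is nonzero. So each block $J_j$ contains one index with $n_s\geq1$ and another with $n_s\leq-1$. Hence $\#J_j\geq2$, so $r\geq2$, and you take $i_\pm$ in $J_1$. Then \cite[Remarks~4.12 and 4.16]{trans} give $f_i\sim f_{i'}$ whenever $i,i'$ lie in the same block, and Lemma~\ref{simintw} converts this into intertwining. Once the theorem is used in this form, the grouping of equal pairs and the degenerate-case analysis in your Step~1 are not needed.
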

\begin{proof}
	Fix a number field $K$ such that $f_i\in K[z]$ and $a_i\in K$ for $1\leq i\leq r$. We work with the archimedean place $v_0$ of $K$ corresponding to the fixed embedding $K\hookrightarrow\overline{\Q}\hookrightarrow\C$.
	
	By \cite[Theorem~1.13]{trans}, $\prod_{i=1}^r\phi_{f_i}(a_i)^{n_i}$ is a root of unity and we have
	\begin{equation}\label{partieq}
		\sum_{s\in J_j}n_s d_{s/j}=0
	\end{equation}
	for $1\leq j\leq l$. Here $l\in\Z_{>0}$, $\{1,\dots,r\}=\sqcup_{j=1}^l J_j$ is a partition of $\{1,\dots,r\}$ ($J_j\neq\emptyset$ for $1\leq j\leq l$), and $d_{s/j}$ is a positive integer for $1\leq j\leq l$ and $s\in J_j$. These data
	$$(\{1,\dots,r\}=\sqcup_{j=1}^l J_j,(d_{s/j})_{1\leq j\leq l,s\in J_j})$$
	are purely geometric invariants of $(f_1,a_1),\dots,(f_r,a_r)$; see \cite[\S~4.5]{trans} for details.
	
	Since every $d_{s/j}$ ($1\leq j\leq l$, $s\in J_j$) is positive and every $n_s$ ($1\leq s\leq r$) is nonzero, from \eqref{partieq} we conclude that for every $1\leq j\leq l$, there exist $s_{j,+}, s_{j,-}\in J_j$ such that
	$$n_{s_{j,+}}\geq1\quad\text{and}\quad n_{s_{j,-}}\leq-1;$$
	so $\#J_j\geq2$, and therefore $r\geq2$. Set $i_+=s_{1,+}$ and $i_-=s_{1,-}$. Then $n_{i_+}\geq1$, $n_{i_-}\leq-1$, and $i_+,i_-$ are distinct indices in $J_1$. By Lemma~\ref{simintw} and \cite[Remarks~4.12~and~4.16]{trans}, $f_i$ and $f_{i^\prime}$ are intertwined if $i$ and $i^\prime$ belong to the same $J_j$ ($1\leq j\leq l$). Hence $f_{i_+}$ and $f_{i_-}$ are intertwined. This completes the proof.
\end{proof}
\begin{Rem}\label{transBottrmk}
The product $\prod_{i=1}^r\phi_{f_i}(a_i)^{n_i}$ must be a root of unity if it is algebraic \cite[Theorem~1.13]{trans}. In our setting, we only need to consider the case where $\prod_{i=1}^r\phi_{f_i}(a_i)^{n_i}$ is a $(d-1)$-th root of unity.
\end{Rem}
As a corollary, we determine all possible algebraic values of $\exp\left(\sL_f\right)$ where $f\in\C[z]$ is a polynomial of degree $d$:
\begin{Cor}\label{LyaAlg}
	Let $f\in\overline{\Q}[z]$ be a polynomial of degree $d\geq2$. Then either $\exp\left(\sL_f\right)\notin\overline{\Q}$, or $\sL_f=\log(d)$ (and $\exp\left(\sL_f\right)=d$).
\end{Cor}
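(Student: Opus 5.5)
We argue by contraposition: assume $\sL_f>\log(d)$, so $J(f)$ is disconnected, and suppose $\exp(\sL_f)\in\overline{\Q}$. We derive a contradiction from Theorem~\ref{transBott}.

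First, $f$ is non-exceptional, since exceptional polynomials have $\sL_f=\log(d)$. By Przytycki's formula \eqref{LyapGreen},
$$\exp(\sL_f)=d\prod_{c}\exp(g_f(c)),$$
where $c$ runs over the finite critical points counted with multiplicity. Only the escaping critical points contribute, and at least one exists because $J(f)$ is disconnected.

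Next we move these contributions into $\Sigma(f)$. For an escaping critical point $c$, choose $m_c\geq0$ with $f^{\circ m_c}(c)\in\Sigma(f)$. Then
$$g_f(c)=d^{-m_c}g_f(f^{\circ m_c}(c)).$$
Take a common $N\geq\max_c m_c$. Using $f(\Sigma(f))\subseteq\Sigma(f)$, all points $a_c:=f^{\circ N}(c)$ lie in $\overline{\Q}\cap\Sigma(f)$, and $g_f(c)=d^{-N}g_f(a_c)$. By \eqref{BottGreen},
$$\exp(d^N\sL_f)\,d^{-d^N}=\prod_c|\phi_f(a_c)|.$$
The left side is algebraic under our assumption.

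It remains to remove the absolute value. Write $|\phi_f(a)|^2=\phi_f(a)\overline{\phi_f(a)}$. We claim $\overline{\phi_f(a)}$ equals $\phi_{\overline f}(\overline a)$ up to a $(d-1)$-th root of unity. Indeed, conjugating the coefficients of the Laurent series $\phi_f$ gives a Böttcher coordinate of $\overline f$, by uniqueness up to $(d-1)$-th roots of unity, and $\overline a\in\Sigma(\overline f)=\overline{\Sigma(f)}$. Also $\overline f\in\overline{\Q}[z]$ is non-exceptional and $\overline a\in\overline{\Q}$. Squaring therefore gives
$$\prod_c\phi_f(a_c)\prod_c\phi_{\overline f}(\overline{a_c})\in\overline{\Q}.$$
This is a product in which every exponent equals $+1$; after grouping equal pairs $(f_i,a_i)$, all the exponents are still positive.

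This contradicts Theorem~\ref{transBott}, which requires some exponent $n_{i_-}\leq-1$. Hence $\exp(\sL_f)\notin\overline{\Q}$ whenever $\sL_f>\log(d)$. In the remaining case $\sL_f=\log(d)$, and then $\exp(\sL_f)=d$.

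The step needing the most care is the conjugation: checking that the conjugated Böttcher series converges on $\overline{\Sigma(f)}$ and represents a Böttcher coordinate of $\overline f$. This should follow from the choice $B(f)=B(\overline f)$ and the uniqueness statement. We must also make sure that merging repeated factors into a single index keeps every exponent nonzero and positive, so that Theorem~\ref{transBott} applies with distinct index data.
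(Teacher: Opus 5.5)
Your proof is correct and follows essentially the same route as the paper: Przytycki's formula, pushing the escaping critical points forward into $\Sigma(f)$ so that \eqref{BottGreen} turns the algebraic quantity into $\bigl|\prod\phi_f(a_i)\bigr|$, squaring and using $\overline{\phi_f}=\zeta\phi_{\overline f}$ to remove the absolute value, and then contradicting Theorem~\ref{transBott} because every exponent is $+1$. Your extra step of merging repeated pairs is harmless but unnecessary, since Theorem~\ref{transBott} does not require the pairs $(f_i,a_i)$ to be distinct; the paper applies it directly to the $2r$ pairs.
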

\begin{proof}
Suppose, for the sake of contradiction, that $\exp\left(\sL_f\right)\in\overline{\Q}\setminus\{d\}$. In particular, $f$ is non-exceptional. Since $\sL_f\geq\log(d)$, by \eqref{LyapGreen}, we obtain
\begin{equation}\label{Lyacor1}
	\prod_{\{c\in\C\colon f^\prime(c)=0\}}\exp(g_f(c))\in\overline{\Q}\cap\R_{>1}.
\end{equation}
Let $c_1,\dots,c_{d-1}$ be the critical points of $f$ in $\C$ (in fact, in $\overline{\Q}$) counted with multiplicity. By \eqref{Lyacor1}, after renumbering, we may assume there is an integer $r\in\{1,\dots,d-1\}$ such that for $1\leq i\leq d-1$,
$$g_f(c_i)>0\text{ (equivalently, }c_i\notin K(f)\text{)}\quad\text{if and only if}\quad1\leq i\leq r.$$
Fix an integer $l\geq 1$ such that $a_i:=f^{\circ l}(c_i)\in\Sigma(f)=\Sigma(\overline{f})$ for all $1\leq i\leq r$. Taking the $d^l$-th power of \eqref{Lyacor1}, by \eqref{BottGreen} we get
\begin{equation}\label{Lyacor2}
	\left| \prod_{i=1}^r \phi_f(a_i)\right|\in\overline{\Q}\cap\R_{>1}.
\end{equation}
Let $\overline{\phi_f}$ be the Laurent series obtained from $\phi_f$ by applying complex conjugation to all coefficients. It is clear that $\overline{\phi_f}$ is a B{\"o}ttcher coordinate of $\overline{f}$; so
\begin{equation}\label{zeta}
	\overline{\phi_f}=\zeta\phi_{\overline{f}}
\end{equation}
for some $(d-1)$-th root of unity $\zeta\in\C^\times$. Taking the square of \eqref{Lyacor2}, by \eqref{zeta} we see that
\begin{equation}\label{Lyacor3}
	\prod_{i=1}^r\left(\phi_f(a_i)\phi_{\overline{f}}(\overline{a_i})\right)\in\overline{\Q}.
\end{equation}
Applying Theorem~\ref{transBott} to the $2r$ pairs $(f,a_1),\dots,(f,a_r),(\overline{f},\overline{a_1}),\dots,(\overline{f},\overline{a_r})$ and integers $n_1=\cdots=n_{2r}=1$, we conclude that there exists an index $1\leq i_-\leq 2r$ such that $1=n_{i_-}\leq-1$, a contradiction.
\end{proof}

Let $d\geq2$ be an integer. Set
$$\sD_d:=\{f\in\overline{\Q}[z]\colon f\text{ is non-exceptional of degree }d\}$$
and
\begin{equation*}
	\sT_d:=\{(f,a)\in\sD_d\times\overline{\Q}\colon\sigma(a)\notin K(\sigma(f))\text{ for some }\sigma:\overline{\Q}\hookrightarrow\C\}.
\end{equation*}
For multiplicative canonical heights, we obtain the following result from \cite[Proposition~1.14]{trans} similarly:
\begin{Prop}\label{transht}
	Let $d\geq2$ and $r\geq1$ be integers. Let $(f_1,a_1),\dots,(f_r,a_r)$ be $r$ pairs in $\sT_d$. For $1\leq i\leq r$, let $n_i\in\Z\setminus\{0\}$. Assume that
	$$\prod_{i=1}^r\hat{H}_{f_i}(a_i)^{n_i}\in\overline{\Q}$$
	is algebraic. Then $r\geq2$ and there exist indices $1\leq i_+\neq i_-\leq r$ with $n_{i_+}\geq1$ and $n_{i_-}\leq-1$ such that $f_{i_+}$ and $f_{i_-}$ are intertwined.
\end{Prop}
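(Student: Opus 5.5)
The proof will follow the same pattern as Theorem~\ref{transBott}, with \cite[Proposition~1.14]{trans} in place of \cite[Theorem~1.13]{trans}. First fix a number field $K$ containing all coefficients of the $f_i$ and all the $a_i$. The hypothesis $(f_i,a_i)\in\sT_d$ says that $a_i$ escapes at some archimedean place, equivalently $\hat{h}_{f_i}(a_i)>0$ (non-preperiodic), which is the setting of \cite[Proposition~1.14]{trans}. Non-exceptionality is built into $\sD_d$. Applying that proposition to $\prod_i\hat{H}_{f_i}(a_i)^{n_i}\in\overline{\Q}$, I expect to obtain a conclusion of the same shape as in the Böttcher case: the product is in fact $1$ (or a root of unity, hence $1$ since it is positive real), and there is a partition $\{1,\dots,r\}=\sqcup_{j=1}^l J_j$ into nonempty blocks with positive integers $d_{s/j}$ such that $\sum_{s\in J_j}n_sd_{s/j}=0$ for every $j$. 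Here the indices in a common block $J_j$ correspond to pairs $(f_s,a_s)$ that are related by a periodic curve, i.e.\ $f_s\sim f_{s'}$.

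Granting this, the combinatorial step is identical to the proof of Theorem~\ref{transBott}. Since every $d_{s/j}>0$ and every $n_s\neq0$, each block $J_j$ must contain indices $s_{j,+},s_{j,-}$ with $n_{s_{j,+}}\geq1$ and $n_{s_{j,-}}\leq-1$. Hence $\#J_j\geq2$ and $r\geq2$. Taking $i_\pm=s_{1,\pm}$ gives distinct indices of opposite sign in one block. Then \cite[Remarks~4.12 and 4.16]{trans} give $f_{i_+}\sim f_{i_-}$, and Lemma~\ref{simintw} upgrades this to intertwined, since both are non-exceptional.

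The main obstacle is the bookkeeping in invoking \cite[Proposition~1.14]{trans}. I must check that its hypotheses are exactly membership in $\sT_d$ (escape at some embedding $\sigma$, not necessarily the fixed one). I must also check that the partition data it produces carry the same geometric meaning (semi-equivalence within blocks) as in the Böttcher case. If that proposition is instead phrased via local heights, I would first decompose $\hat{h}_{f_i}(a_i)$ into a sum over places using \cite[Theorem~3.29]{Silverman2007}. At the non-archimedean places the local heights are rational multiples of $\log$ of absolute values of elements of $K$, which are algebraic after exponentiating. At each archimedean place $\sigma$ the local height is $\log|\phi_{\sigma(f_i)}(\sigma(f_i)^{\circ m}(\sigma(a_i)))|/d^m$. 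This reduces the claim to Theorem~\ref{transBott} applied to the combined collection of pairs over all embeddings, handling complex conjugate embeddings as in Corollary~\ref{LyaAlg}. In that reduction, at least one archimedean term survives by the $\sT_d$ hypothesis, and the sign pattern of the exponents is preserved.
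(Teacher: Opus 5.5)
The gap is your expectation about what \cite[Proposition~1.14]{trans} produces, namely blocks whose indices satisfy $f_s\sim f_{s'}$. This cannot hold in the height setting. The global height is Galois invariant, $\hat{H}_{\sigma(f)}(\sigma(a))=\hat{H}_f(a)$, so a pair and any Galois conjugate of it always cancel.

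In the paper the blocks are the classes of the relation $\sim_w$ on $\sT_d$, where $(f,a)\sim_w(g,b)$ means $\overline{O_{(f,\sigma(g))}((a,\sigma(b)))}^{\Zar}\neq\A^2$ for some $\sigma\in\Gal(\overline{\Q}/\Q)$. Each block carries a point $q_j=[m_{s/j}]_{s\in J_j}\in\P^{\#J_j-1}(\Q)$ with $m_{s/j}\in\Z_{>0}$; these replace your $d_{s/j}$. \cite[Proposition~1.14]{trans} then gives $\sum_{s\in J_j}n_sm_{s/j}=0$, and your sign argument goes through verbatim.

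However, \cite[Remark~4.12]{trans} together with Lemma~\ref{simintw} only yields that $f_{i_+}$ and $\sigma(f_{i_-})$ are intertwined for some $\sigma\in\Gal(\overline{\Q}/\Q)$. No argument can remove $\sigma$. Take $f_1=z^2+\sqrt{2}$, $f_2=z^2-\sqrt{2}$, $a_1=a_2=10$, $n_1=1$, $n_2=-1$. Both pairs lie in $\sT_2$ and the product equals $1$. Yet $\sL_{f_1}>\log 2=\sL_{f_2}$, since $\sqrt{2}>\frac14$ while $-\sqrt{2}\in[-2,\frac14]$. So $f_1$ and $f_2$ are not intertwined, by Lemma~\ref{intwLyapsame}.

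The conclusion you can actually reach is therefore the Galois-twisted one. It is also what the paper's proof establishes, despite its closing phrase ``by the construction of the partition''. This twisted version is exactly what Theorem~\ref{thmht} needs, and Corollary~\ref{disJGal} uses only the sign part.

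Your fallback is a genuinely different and viable route. You decompose into local heights, observe that the non-archimedean part of $\prod_i\hat{H}_{f_i}(a_i)^{n_i}$ is a product of rational powers of primes, and apply Theorem~\ref{transBott} to the archimedean B\"ottcher factors over all embeddings, pairing conjugate embeddings as in Corollary~\ref{LyaAlg}. This needs only \cite[Theorem~1.13]{trans}, bypassing $\sim_w$ and \cite[Proposition~1.14]{trans}. The $\sT_d$ hypothesis is exactly what guarantees that each $i$ contributes at least one factor with exponent of the sign of $n_i$, which also forces $r\geq2$.

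But the output of this route is that $\sigma(f_{i_+})$ and $\tau(f_{i_-})$ are intertwined for two embeddings $\sigma,\tau$. That again only says $f_{i_+}$ is intertwined with a Galois conjugate of $f_{i_-}$; saying it ``reduces the claim'' hides this.

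Finally, membership in $\sT_d$ is not equivalent to $\hat{h}_f(a)>0$. The pair $(z^2+\frac14,0)$ escapes only $2$-adically, so $\hat{h}>0$ but the pair is not in $\sT_2$. By \eqref{Ngu}, the correct characterization of $\sT_d$ is $\hat{H}_f(a)\notin\overline{\Q}$.
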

\begin{proof}
	For $(f,a),(g,b)\in\sT_d$, we write $(f,a)\sim_w(g,b)$ if
	$$\overline{O_{(f,\sigma(g))}((a,\sigma(b)))}^{\Zar}\neq\A^2$$
	for some $\sigma\in\Gal(\overline{\Q}/\Q)$. Then $\sim_w$ defines an equivalence relation on $\sT_d$ \cite[\S~4.6]{trans}. By Lemma~\ref{simintw} and \cite[Remark~4.12]{trans}, if $(f,a)\sim_w(g,b)$ in $\sT_d$, then $f$ and $\sigma(g)$ are intertwined for some $\sigma\in\Gal(\overline{\Q}/\Q)$.
	
	Let $\{1,\dots,r\}=\sqcup_{j=1}^l J_j$ ($l\in\Z_{>0}$) be a partition such that $J_j\neq\emptyset$ for $1\leq j\leq l$ and that for all $1\leq i_1,i_2\leq r$, $(f_{i_1},a_{i_1})\sim_w(f_{i_2},a_{i_2})$ if and only if $i_1,i_2\in J_j$ for some $1\leq j\leq l$. For every $1\leq j\leq l$, we can associate a rational point $q_j:=Q((f_s,a_s),s\in J_j)\in\P^{\#J_j-1}(\Q)$, which is a geometric invariant up to Galois conjugation \cite[\S~4.6]{trans}. Denote the homogeneous coordinates of $\P^{\#J_j-1}$ by $[z_s]$ ($s\in J_j$). By \cite[\S~4.6 and \S~4.4]{trans}, the point $q_j$ admits a homogeneous coordinate representation of the form $[m_{s/j}]_{s\in J_j}$, where $m_{s/j}\in\Z_{>0}$ for every $s\in J_j$.
	
	Since $\prod_{i=1}^r\hat{H}_{f_i}(a_i)^{n_i}$ is algebraic, by \cite[Proposition~1.14]{trans}, we have
	\begin{equation}\label{partieq2}
		\sum_{s\in J_j}n_s m_{s/j}=0
	\end{equation}
	for $1\leq j\leq l$. Since every $m_{s/j}$ ($1\leq j\leq l$, $s\in J_j$) is positive and every $n_s$ ($1\leq s\leq r$) is nonzero, from \eqref{partieq2} we conclude that for every $1\leq j\leq l$, there exist $s_{j,+}, s_{j,-}\in J_j$ such that
	$$n_{s_{j,+}}\geq1\quad\text{and}\quad n_{s_{j,-}}\leq-1;$$
	so $\#J_j\geq2$, and therefore $r\geq2$. Set $i_+=s_{1,+}$ and $i_-=s_{1,-}$. Then $n_{i_+}\geq1$, $n_{i_-}\leq-1$, and $i_+,i_-$ are distinct indices in $J_1$. The polynomials $f_{i_+}$ and $f_{i_-}$ are intertwined by the construction of the partition. This completes the proof.
\end{proof}
\begin{Rem}
	By \cite[Remark~4.26]{trans}, $\prod_{i=1}^r\hat{H}_{f_i}(a_i)^{n_i}$ is algebraic if and only if $\prod_{i=1}^r\hat{H}_{f_i}(a_i)^{n_i}=1$. Since we need to consider Galois conjugation in Proposition~\ref{transht}, the partition of $\{1,\dots,r\}$ in Proposition~\ref{transht} is ``coarser'' (it may not be strictly coarser) than that in Theorem~\ref{transBott}.
\end{Rem}
Nguyen \cite[Corollary~1.6]{ngu23} (see also \cite[\S~4.6]{trans}) proved that for a non-exceptional polynomial $f\in\overline{\Q}[z]$ of degree $\geq2$ and an algebraic point $a\in\overline{\Q}$,
\begin{equation}\label{Ngu}
	\hat{H}_f(a)\in\overline{\Q}\quad\text{if and only if}\quad\sigma(a)\in K(\sigma(f))\text{ for every }\sigma\in\Gal({\overline{\Q}/\Q}).
\end{equation}
Combining \eqref{Ngu} and Proposition \ref{transht}, we obtain a characterization for $f$ to have a disconnected Julia set after Galois conjugation:
\begin{Cor}\label{disJGal}
	For a polynomial $f\in\overline{\Q}[z]$ of degree $d\geq2$, the following statements are equivalent:
	\begin{enumerate}
		\item $f$ has a disconnected Julia set after Galois conjugation;
		\item $\hat{H}_{\crit}(f)\notin\overline{\Q}$.
	\end{enumerate}
\end{Cor}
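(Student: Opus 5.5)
We prove both implications. The direction (2)$\Rightarrow$(1) is a direct argument via \eqref{Ngu}, and (1)$\Rightarrow$(2) mirrors the proof of Corollary~\ref{LyaAlg}, with Proposition~\ref{transht} in place of Theorem~\ref{transBott}.

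\textbf{(2)$\Rightarrow$(1).} We prove the contrapositive. Suppose $J(\sigma(f))$ is connected for every $\sigma\in\Gal(\overline{\Q}/\Q)$. Then every critical point of $\sigma(f)$ lies in $K(\sigma(f))$.

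The critical points of $\sigma(f)$ are exactly the $\sigma(c)$, where $c$ runs over the critical points of $f$ (with multiplicity). Hence $\sigma(c)\in K(\sigma(f))$ for every critical point $c$ of $f$ and every $\sigma$.

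If $f$ is exceptional, $\hat{h}_\crit(f)=0$ because exceptional polynomials are post-critically finite, so $\hat{H}_\crit(f)=1\in\overline{\Q}$. If $f$ is non-exceptional, \eqref{Ngu} gives $\hat{H}_f(c)\in\overline{\Q}$ for each critical point $c$. Then \eqref{mcrithtdef} shows that $\hat{H}_\crit(f)$ is a product of algebraic numbers, hence algebraic.

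\textbf{(1)$\Rightarrow$(2).} Suppose $J(\sigma_0(f))$ is disconnected for some $\sigma_0$, and assume for contradiction that $\hat{H}_\crit(f)\in\overline{\Q}$.

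First, $f$ is non-exceptional. Indeed, $\sigma_0(f)$ has disconnected Julia set, so it is not exceptional, and exceptionality is preserved by Galois conjugation.

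Let $c_1,\dots,c_{d-1}$ be the critical points of $f$, counted with multiplicity. Renumber them so that $(f,c_i)\in\sT_d$ exactly for $1\le i\le r$. We have $r\ge1$: some critical point of $\sigma_0(f)$ escapes, and it has the form $\sigma_0(c_i)$, which is the condition defining $\sT_d$ for the embedding $\sigma_0$.

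For $i>r$, the pair $(f,c_i)$ is not in $\sT_d$, meaning $\sigma(c_i)\in K(\sigma(f))$ for all embeddings $\sigma$. By \eqref{Ngu}, $\hat{H}_f(c_i)\in\overline{\Q}$. It is moreover nonzero, since $\hat{H}_f\ge1$. Dividing $\hat{H}_\crit(f)$ by these factors gives
$$\prod_{i=1}^r\hat{H}_f(c_i)\in\overline{\Q}.$$

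Now apply Proposition~\ref{transht} to the pairs $(f,c_1),\dots,(f,c_r)\in\sT_d$ with $n_1=\cdots=n_r=1$. It requires some index $i_-$ with $n_{i_-}\le-1$, which is impossible since all exponents equal $1$. This contradiction proves (2).

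\textbf{Main point of care.} The only subtlety is verifying the hypotheses of Proposition~\ref{transht}. That proposition applies only to pairs in $\sT_d$, which is why the factors with $(f,c_i)\notin\sT_d$ must first be removed using \eqref{Ngu}. It also requires $f$ to be non-exceptional, which is handled above.
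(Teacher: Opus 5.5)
Your proposal is correct and follows essentially the same route as the paper. The paper also disposes of the exceptional case via post-critical finiteness, strips off the factors $\hat{H}_f(c_i)$ with $\sigma(c_i)\in K(\sigma(f))$ for all $\sigma$ using \eqref{Ngu}, and derives a contradiction from Proposition~\ref{transht} applied with all exponents equal to $1$. The only difference is that the paper packages both directions as the equivalence of $r\geq1$ with $r\geq1$ together with $\prod_{i=1}^r\hat{H}_f(c_i)\notin\overline{\Q}$, whereas you argue the two implications separately.
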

\begin{proof}
	If $f$ is exceptional, then $\sigma(f)$ is exceptional and $J(\sigma(f))$ is connected for every $\sigma\in\Gal({\overline{\Q}/\Q})$. Exceptional polynomials are post-critically finite, hence their multiplicative critical heights are equal to $1$. We may assume that $f$ is non-exceptional.
	
	Let $c_1,\dots,c_{d-1}$ be the critical points of $f$ in $\C$ (in fact, in $\overline{\Q}$) counted with multiplicity. By definition, we have $\hat{H}_{\crit}(f)=\prod_{i=1}^{d-1}\hat{H}_f(c_i)$. After renumbering, we may assume that there is a (non-negative) integer $r\in\{0,1,\dots,d-1\}$ such that for $1\leq i\leq d-1$,
	$$\sigma(c_i)\notin K(\sigma(f))\text{ for some }\sigma\in\Gal({\overline{\Q}/\Q})\quad\text{if and only if}\quad1\leq i\leq r.$$
	Then (1) is equivalent to: $(1)^\prime$ $r\geq1$. See \S~\ref{secLya}.
	
	By \eqref{Ngu}, for $r+1\leq i\leq d-1$, we have $\hat{H}_f(c_i)\in\overline{\Q}\cap\R_{\geq1}$. Therefore, (2) is equivalent to: $(2)^\prime$ $r\geq1$ and $\prod_{i=1}^{r}\hat{H}_f(c_i)\notin\overline{\Q}$.
	
	It suffices to show that $(1)^\prime\Rightarrow(2)^\prime$. Assume $r\geq1$. Suppose, for the sake of contradiction, that $\prod_{i=1}^{r}\hat{H}_f(c_i)\in\overline{\Q}$. Applying Proposition \ref{transht} to the pairs $(f,c_1),\dots,(f,c_r)\in\sT_d$ and integers $n_1=\cdots=n_r=1$, we conclude that there exists an index $1\leq i_-\leq r$ such that $1=n_{i_-}\leq-1$, a contradiction.
\end{proof}

\section{Proof of the main results}\label{secpf}
\begin{proof}[Proof of Theorem~\ref{thmmain}]
	The ``if'' direction follows from Lemma~\ref{intwLyapsame}. It suffices to prove the ``only if'' direction. Suppose that $f,g\in\overline{\Q}[z]$ are polynomials of degree $d\geq2$ such that
	\begin{equation}\label{Lyapequal1}
		\sL_f=\sL_g>\log(d).
	\end{equation}
	
	Let $c_1,\dots,c_{d-1}$ (resp. $\la_1,\dots,\la_{d-1}$) be the critical points of $f$ (resp. $g$) in $\C$ (in fact, in $\overline{\Q}$) counted with multiplicity. By \eqref{LyapGreen}, equation \eqref{Lyapequal1} becomes
	\begin{equation}\label{Lyapequal2}
		\sum_{i=1}^{d-1}g_f(c_i)=\sum_{i=1}^{d-1}g_g(\la_i)>0.
	\end{equation}
	After renumbering, we may assume that there are positive integers $1\leq s,t\leq d-1$ such that for $1\leq i\leq d-1$, $c_i\notin K(f)$ if and only if $1\leq i\leq s$, and  $\la_i\notin K(g)$ if and only if $1\leq i\leq t$. Then \eqref{Lyapequal2} becomes
	\begin{equation}\label{Lyapequal3}
		\sum_{i=1}^{s}g_f(c_i)=\sum_{i=1}^{t}g_g(\la_i),
	\end{equation}
	where all terms on both sides are strictly positive.
	
	Take a positive integer $N\geq1$ such that
	$$a_i:=f^{\circ N}(c_i)\in\Sigma(f)=\Sigma(\overline{f})$$
	for $1\leq i\leq s$, and
	$$b_i:=g^{\circ N}(\la_i)\in\Sigma(g)=\Sigma(\overline{g})$$
	for $1\leq i\leq t$. Multiplying both sides of \eqref{Lyapequal3} by $d^N$ and using \eqref{BottGreen}, we obtain
	\begin{equation}\label{Lyapequal4}
		\log\left|\prod_{i=1}^s\phi_f(a_i)\right|=\log\left|\prod_{i=1}^t\phi_g(b_i)\right|.
	\end{equation}
	
	Note that there are $(d-1)$-th roots of unity $\zeta_1,\zeta_2\in\C^\times$ such that
	\begin{equation}\label{zeta2}
		\overline{\phi_f}=\zeta_1\phi_{\overline{f}}\quad\text{and}\quad\overline{\phi_g}=\zeta_2\phi_{\overline{g}};
	\end{equation}
	see \eqref{zeta}. Then from \eqref{Lyapequal4} and \eqref{zeta2} we obtain
	\begin{equation}\label{Lyapequal5}
		\prod_{i=1}^s\left(\phi_f(a_i)\phi_{\overline{f}}(\overline{a_i})\right) \prod_{j=1}^t\left(\phi_g(b_j)^{-1}\phi_{\overline{g}}(\overline{b_j})^{-1}\right)=\zeta_1^{-s}\zeta_2^t,
	\end{equation}
	which is a $(d-1)$-th root of unity.
	
	Since $\sL_f=\sL_{\overline{f}}=\sL_g=\sL_{\overline{g}}>\log(d)$, the polynomials $f$, $\overline{f}$, $g$, and $\overline{g}$ are all non-exceptional. Applying Theorem~\ref{transBott} to \eqref{Lyapequal5}, we conclude that there exist $\tilde{f}\in\{f,\overline{f}\}$ and $\tilde{g}\in\{g,\overline{g}\}$ such that $\tilde{f}$ and $\tilde{g}$ are intertwined. By \eqref{intwdef2}, we see that either $f$ and $g$ are intertwined, or $f$ and $\overline{g}$ are intertwined. This completes the proof.
\end{proof}
\begin{proof}[Proof of Theorem~\ref{thmht}]
	By Lemma~\ref{intwCrithtsame}, it suffices to prove the ``only if'' direction. As $\hat{h}_{\crit}(f)=\hat{h}_{\crit}(g)$, by Corollary \ref{disJGal}, $g$ has a disconnected Julia set after Galois conjugation. With the help of \eqref{Ngu}, the proof is completed by replacing \eqref{LyapGreen} and Theorem~\ref{transBott} in the proof of Theorem~\ref{thmmain} with \eqref{crithtdef} and Proposition~\ref{transht}, respectively, and arguing similarly.
\end{proof}
\begin{proof}[Proof of Theorem~\ref{thmgeninj} using Theorem~\ref{thmmain}]
	Let $d\geq2$ be an integer.
	
	By \cite[Theorem~30]{Berteloot}, Lyapunov exponents are determined by length spectra (hence also by multiplier spectra); that is, for all $f,g\in\C[z]$ of degree $d$, if $L(f)=L(g)$, then $\sL_f=\sL_g$.
	
	Let
	$$\Man_d:=\{[f]\in\MPoly^d(\C):\sL_f=\log(d)\}$$
	be the locus of conjugacy classes of degree $d$ polynomials with connected Julia sets, which is compact and closed in the affine variety $\MPoly^d(\C)$ with respect to the complex topology (cf. \cite[Corolloary 3.7]{BH88}).
	
	By \cite[Theorems~3.51~and~3.52]{FG22} (see also \cite[Theorem~1.5 and Proposition~1.7]{PolyMulti}), there exists a non-empty Zariski affine open subset $U=U(d)\subseteq\MPoly^d(\C)$ satisfying the following properties:
	\begin{itemize}
		\item $U$ is invariant under complex conjugation, i.e., $\tau([f]):=[\overline{f}]\in U$ for all $[f]\in U$;
		\item for all $[f],[g]\in U$, $f$ and $g$ are intertwined if and only if $[f]=[g]$, i.e., $f$ and $g$ are (affinely) conjugate.
	\end{itemize}
	The complex conjugation $\tau$ on $\MPoly^d$ (over $\Spec(\C)$) is the semi-linear automorphism induced by the involution $\Spec(\C)\to\Spec(\C)$ corresponding to the usual complex conjugation $\C\to\C$; see \cite[Chapter~II, Ex.~4.7]{52}. Note that $\tau$ preserves $\overline{\Q}$-points. Let $W$ be a non-empty Zariski affine open subset of the Zariski closure of $\tilde{\tau}_d(U)$ such that $\tilde{\tau}_d^{-1}(W)\subseteq U$ and $\tilde{\tau}_d:\tilde{\tau}_d^{-1}(W)\to W$ is a finite \'etale morphism of degree $m\geq1$. (Observe that $m=m(d)$ is independent of the choice of $W$.) After shrinking $U$ and $W$ if necessary, we may assume $U=\tilde{\tau}_d^{-1}(W)$. Recall that $\MPoly^d$ is defined over $\Q$, so we could take $U$ to be defined over $\overline{\Q}$. From now on, we only consider conjugacy classes in $U$. Let $U(\overline{\Q}):=U\cap\MPoly^d(\overline{\Q})$ be the set of $\overline{\Q}$-points in $U$.
	
	Suppose, for the sake of contradiction, that $m\geq2$. As in the proof of \cite[Theorem~1.3]{JX23}, we can find two algebraic families $h_1,h_2:C\to\MPoly^d$ of degree $d$ polynomials parametrized by a geometrically integral affine curve $C$, defined over $\overline{\Q}$, satisfying:
	\begin{itemize}
		\item $h_i$ is non-isotrivial (i.e., $h_i(C)$ is not a single point in $\MPoly^d$), $1\leq i\leq2$;
		\item $h_i(C(\C))\subseteq U$, $1\leq i\leq2$;
		\item $h_1(t)\neq h_2(t)$, for every $t\in C(\C)$;
		\item $\tilde{\tau}_d\circ h_1=\tilde{\tau}_d\circ h_2\colon C\to\A^M$, where $M=M(d)\geq1$ is as in \S~\ref{secmul}.
	\end{itemize}
	
	We introduce several subsets of $U\times U$ (of complex points). Let
	$$\Gamma:=\{([f],[\overline{f}]):[f]\in U\}\subset U\times U$$
	be the graph of the complex conjugation map $\tau$ on $U$. Note that $\Gamma$ is complex closed in $U\times U$. Let
	$$F:=\{([f],[g])\in U\times U:\tilde{\tau}_d([f])=\tilde{\tau}_d([g])\}\subseteq U\times U$$
	be the locus of pairs of conjugacy classes with the same multiplier spectrum over $U$, which is Zariski closed in $U\times U$. Let $\Delta\subset U\times U$ be the diagonal.
	
	Let $E$ be the Zariski closure of $(h_1,h_2)(C(\C))$ in $U\times U$, which is an irreducible (algebraic) curve in $U\times U$ defined over $\overline{\Q}$. The curve $E$ is affine because $U\times U$ is affine. Note that we have
	$$E\subseteq F\quad\text{and}\quad E\nsubseteq\Delta$$
	by the construction of $h_1$ and $h_2$. Then $E\cap\Delta$ is finite. After shrinking $U$ and $W$ if necessary, we may assume that $E\cap\Delta=\emptyset$ and $E$ is smooth. Let $E(\overline{\Q}):=E\cap U(\overline{\Q})^2$ be the set of $\overline{\Q}$-points in $E$.
	
	As multiplier spectra determine Lyapunov exponents, we have
	$$E\subseteq F\subseteq\Man_d^2\cup(U\setminus\Man_d)^2.$$
	In the smooth irreducible affine (complex algebraic) curve $E$, since $E\cap\Man_d^2$ is a complex compact subset, $V:=E\setminus\Man_d^2$ is a non-empty complex open subset of $E$. Because $E$ is defined over $\overline{\Q}$, the set $V(\overline{\Q}):=E(\overline{\Q})\setminus\Man_d^2$ of $\overline{\Q}$-points in $V$ is dense in $V$ in the complex topology. By Theorem~\ref{thmmain}, the construction of $U$, and the fact that $E\cap\Delta=\emptyset$, we obtain
	\begin{equation}\label{Vbar}
		V(\overline{\Q})\subseteq\Gamma.
	\end{equation}
	Taking closure in complex topology on both sides of \eqref{Vbar} in $U\times U$, we obtain $V\subseteq\Gamma$. 
	
	The complex open subset $V\subseteq E$ with complex structure is a complex manifold of dimension one (it might not be connected). For $1\leq i\leq2$, let $\pi_i:U\times U\to U$ be the projection onto the $i$-th factor which is an algebraic morphism over $\C$, and set $E_i=\pi_i(E)$. Then each $E_i$ is an irreducible complex algebraic curve in $U$. For $1\leq i\leq2$, the set
	$$V_i:=\pi_i(V)=E_i\setminus\Man_d$$
	is a non-empty complex open subset of $E_i$. Because $V$ is contained in the graph $\Gamma$ of $\tau\vert_U$, the restrictions $\pi_1\vert_V:V\to V_1$ and $\pi_2\vert_V:V\to V_2$ are bijective and holomorphic. We see that $V\cong V_1\cong V_2$ as complex manifolds, and that the restriction $\tau\vert_{V_1}$ can be expressed as
	\begin{equation}\label{tau}
		\tau\vert_{V_1}=\pi_2\circ(\pi_1\vert_V)^{-1}:V_1\to V_2.
	\end{equation}
	Since $\pi_1$ and $\pi_2$ are morphisms between $\C$-schemes (on $\C$-points), \eqref{tau} implies that the anti-holomorphic map $\tau\vert_{V_1}$ must be holomorphic on the complex manifold $V_1$. However, a map on a complex manifold $M$ can be simultaneously holomorphic and anti-holomorphic only if $M$ has complex dimension zero. Since $\dim_\C(V_1)=1$, we obtain a contradiction. This contradiction shows that $m=1$, i.e., $\tilde{\tau}_d$ is generically injective.
\end{proof}

\subsection*{Acknowledgement}
The first-named author, Zhuchao Ji, is supported by National Key R\&D Program of China (No. 2025YFA1018300), NSFC Grant (No. 12401106), and ZPNSF grant (No. XHD24A0201). The second- and third-named authors, Junyi Xie and Geng-Rui Zhang, are supported by NSFC Grant (No. 12271007).

\end{document}